\numberwithin{equation}{section}
\newtheorem{theorem}{Theorem}[section]
\newtheorem{remark}{Remark}[section]
\title{A phase field model for mass transport with semi-permeable interfaces}
\author[a,b,c]{Yuzhe Qin}
\author[e,f,*]{Huaxiong Huang} 
\author[f]{Yi Zhu}
\author[g]{Chun Liu}
\author[d,*]{Shixin Xu}
\affil[a]{Research Center for Mathematics, 
Beijing Normal University at Zhuhai, 519087, China}
\affil[b]{Research Center for Mathematics and Mathematics Education, 
Beijing Normal University at Zhuhai, 519087, China}
\affil[c]{Laboratory of Mathematics and Complex Systems (Ministry of Education), 
School of Mathematical Sciences, 
Beijing Normal University, Beijing 100875, China}
\affil[d]{Duke Kunshan University, 8 Duke Ave, Kunshan, Jiangsu, China}
\affil[e]{BNU-UIC Joint Mathematical Research center,
Beijing Normal University, Zhuhai 519087, China}
\affil[f]{Department of Mathematics and Statistics, York University,
Toronto, Ontario, Canada} 
\affil[g]{Department of Applied Mathematics, Illinois Institute of Technology, IL 60616, USA}
\affil[*]{Corresponding authors, Shixin.xu@dukekunshan.edu.cn; hhuang@uic.edu.cn}
\date{}
\begin{document} 
\maketitle  
\begin{abstract}
      In this paper,  a thermal-dynamical consistent  model for mass transfer across permeable  moving interfaces is proposed by using energy variation method.  We consider a restricted diffusion problem where the flux across the interface depends on its conductance and the difference of the concentration on each side. The diffusive interface phase-field framework used in here has several advantages over the sharp interface method. First of all, explicit tracking of the interface is no longer necessary. Secondly, the interfacial condition can be incorporated with a variable diffusion coefficient. A detailed asymptotic analysis  confirms the diffusive interface  model converges to the existing sharp interface model as the interface thickness goes to zero. A
    decoupled energy stable numerical scheme is developed to solve this system   efficiently.   Numerical simulations first illustrate the consistency of theoretical results on the sharp interface limit.  Then a convergence study and energy decay test are conducted to ensure the efficiency and stability of the numerical scheme. To illustrate the effectiveness of our phase-field approach, several examples are provided, including a study of a two-phase mass transfer problem where drops with deformable interfaces are suspended in a moving fluid.
\end{abstract}
\section{Introduction}

Mass transfer through a semi-permeable or conducting  interface 
is a common phenomenon in biology \cite{Jiang2013Cellular,gong2014immersed} and material science \cite{flynn1974mass,johansson2005mass}. 
  A representative example is that cell membranes are permeable to  oxygen \cite{wang2020immersed}, ATP \cite{Zhang2018ATP} and ions \cite{pakhomov2009lipid}. 
In this case, the domain  consists of intracellular space denoted by $\Omega^+$ and extracellular space denoted by $\Omega^-$ with a cell membrane $\Gamma$ in between (see Figure \ref{closemem} ).
\begin{figure} \label{closemem}
    \centering
    \includegraphics[width=0.5\textwidth]{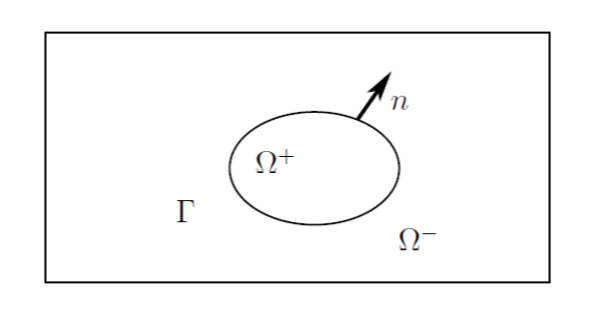}
    \caption{Schematic of mass transport  across the membrane. }
    \label{fig:my_label}
\end{figure}
The diffusion of concentration   can be described by the following diffusion equation in the bulk \cite{xu2018osmosis}, 
\begin{equation}\label{eqn: diffusion}
    \frac{\partial c^\pm }{\partial t} +\nabla\cdot(\mathbf{u}^\pm c^\pm )=\nabla\cdot(D^\pm \nabla c^\pm ), 
\end{equation}
where $c^\pm, \mathbf{u}^\pm $ represent the concentration  of the molecule and  velocity of fluid  in  $\Omega^\pm $ and $D^\pm $ is the diffusion coefficient in $\Omega^\pm $.  Here $c^\pm$ may be discontinuous cross the boundary, like the ion concentrations in  \cite{song2018electroneutral}. 
At the interface $\Gamma$, the trans-membrane flux is set to be continuous   
\begin{equation}\label{eqn: flux law}
    -D^+ \nabla c^+ \cdot \bm{n}=  -D^- \nabla c^- \cdot \bm{n}=K[\![Q(c)]\!], 
\end{equation}
where $K$ is the permeability, $[\![f]\!] = f^+-f^-$ denotes the jump of concentration across the interface and 
$\bm{n}$ is the unit normal vector of the interface. Here $Q(c)$ could have different formats, like $Q(c) = c$ in \cite{gong2014immersed} or $Q(c) = \ln{c}$ in \cite{xu2018osmosis}.



There are numerous papers investigating for mass transfer. 
As a matter of fact, the mass transfer phenomenon can be attributed to diffusive process in binary flows involving free boundaries. 
 For the standard fluid-structure interaction problems,  a variety of methods are developed over the past decades. There are two most popular classes of technique, the sharp interface methods,  like level set methods \cite{adalsteinsson1995fast}, immerse boundary methods \cite{peskin2002immersed}, immerse interface methods \cite{leveque1997immersed}, front tracking methods \cite{unverdi1992front}, and diffuse interface methods \cite{Yue2004Diffuse}.  For mass-transfer across liquid–gas interface, volume of fluid methods are proposed to solve  the phase change problem \cite{welch2000volume,davidson2002volume} and bubble
behaviours with $\mathrm{CO_2/ N_2}-$water system \cite{li2019interactions}.  The immersed interface method is extended to study water transport accross a deformable membrane by Layton \cite{Layton2006Modeling}.  

The immerse boundary method, due to its simplicity, has been applied to many fluid flow problems and has become one of the main numerical techniques for scientific computation.
Gong and Huang et al. \cite{gong2014immersed,wang2020immersed} developed a series of work to understand oxygen transport across permeable membrane by using immerse interface method. In order to ensure the restrict diffusion on the membrane \eqref{eqn: flux law},  an additional equation is introduced to describe the temporal evolution of diffusive flux. 

Unlike sharp interface methods, where the interface are  handled separately by using $\delta$ function or local reconstruction,  the diffusive interface method models the two phase flow and the interface in a  uniform way by a label function $\phi$
\cite{CH1,CH2,CH3}. 
The basic idea can be dated back to van der Waals in the late 19th century\cite{Waals1893Diffuse}. The main advantage of diffusive interface method is that follows the energy dissipation law such that the obtained model is thermal-dynamical consistent. It makes it possible to design efficient energy stable schemes for long time simulation.  However, to our best knowledge,   there is not a diffusive interface model for mass transport through a semi-permeable membrane.  The main challenge is how to impose the restricted diffusion near interface such that as interface thickness goes to zero, the sharp interface limit is consistent with boundary condition \eqref{eqn: flux law}.

In this paper,  a thermal-dynamical consistent diffusive model is first proposed by using energy variational  method \cite{shen2020energy}, which starts from two functionals for the total energy and dissipation, together with the kinematic equations based on physical laws of conservation.
The key is to modify the diffusion coefficient as a function of $\phi$  and interface permeability $K$. The restricted diffusion  only means that the changing rate of energy near the interface follows a specific dissipation rate functional.    Then following the results of Xu et al. \cite{xu2017sharp},  a detailed asymptotic analysis is presented to confirm the proposed diffusive interface model  converges to the existing sharp interface model \eqref{eqn: diffusion} and \eqref{eqn: flux law} as the interface thickness goes to zero. In the next, based on the energy dissipation law, an efficient energy stable decoupled scheme is proposed to solve the obtained system.

The structure of the paper is as follows. 
In section $\ref{sec: model}$, the phase field model for mass transport through a semi-permable interface is proposed by using energy variational method. 
In section $\ref{sec: sharp}$, the sharp interface limits of the phase field model are presented
by asymptotic analysis. 
A decoupled, linear and unconditional energy stable numerical 
scheme is developed  in  section $\ref{sec: scheme}$ by means of the stabilization method and pressure correction method. 
In section $\ref{sec: results}$,   numerical experiments are carried out to verify our theoretically 
results and study the membrane permeability effect. 
Finally, conclusions are drawn  in section $\ref{sec: conclusions}$. 
\section{Phase field model for mass transfer with hydrodynamics}\label{sec: model}
In this section, energy variation method is used to derive a diffusive interface model for mass transport through a semi-permeable membrane with restrict diffusion.
\subsection{Model derivation}
First, phase field variable $\phi$ is introduced to label the different domain, 
\begin{align}\label{def: phi}
    \begin{split}
        \phi({\bm{x},t})=\left\{
            \begin{array}{ll}
                1, &\mbox{in~}\Omega^+,\\
                -1, &\mbox{in~}\Omega^-.
            \end{array}
            \right.
    \end{split}
\end{align}
The interface between the two domains is described by the zero level set
$\Gamma_{t}=\{\bm{x}:\phi(\bm{x},t)=0\}$. 
We will omit the subscript $t$ in the following, i.e. 
we use $\Gamma$ to instead $\Gamma_{t}$.

We start from the following kinematic assumptions on the laws of conservation: in domain $\Omega$
\begin{subequations}\label{eqn: main eqns}
    \begin{align}
        &\frac{\textbf{D} \phi}{\textbf{D} t}= -\nabla\cdot \bm{j}_{\phi}, 
        \label{eqn: phase field}\\
        &\frac{\textbf{D} c}{\textbf{D} t}= -\nabla\cdot \bm{j}_{c}, 
        \label{eqn: concertraction}\\
        &\rho\frac{\textbf{D} \bm{u}}{\textbf{D} t}=
        \nabla\cdot \sigma_{\nu}+\nabla\cdot\sigma_{\phi}, 
        \label{eqn: velocity}\\
        &\nabla\cdot\bm{u}=0, 
        \label{eqn: incompressible condition}
    \end{align}
\end{subequations}
where $\rho$ is the density, $\phi$ is the phase label function, $c$ is the concentration of a substance, $\bm{u}$ is the fluid velocity, and $ \frac{\textbf{D}f}{\textbf{D}t}=\frac{\partial f}{\partial t}+(\bm{u}\cdot\nabla)f$ is the material derivative.  

Here Eq. \eqref{eqn: phase field} is the conservation of label function with unknown flux $\bm{j}_{\phi}$; Eq. \eqref{eqn: concertraction} is the conservation of mass with unknown mass flux $\bm{j}_{c}$; Eq. \eqref{eqn: velocity} is the conservation of momentum with  unknown stress induced by viscosity of fluid
$\sigma_{\nu}$ and  stress induced by two-phase flow interface  $ \sigma_{\phi} $.     

On the boundary of  the domain, the boundary conditions can be given as 
\begin{equation}\label{eqn: boundary conditions}
    \bm{j}_{\phi}\cdot\bm{n}|_{\partial\Omega}=0, \quad
    \bm{j}_{c}\cdot\bm{n}|_{\partial\Omega}=0, \quad
    \nabla\phi\cdot\bm{n}|_{\partial\Omega}=0, \quad
    \bm{u}|_{\partial\Omega}=0, 
\end{equation}
where $\bm{n}$ is the outward normal on the domain boundary $\partial\Omega$. 
Then the system $\eqref{eqn: main eqns}$ conserves the local mass density, 
i.e. $\frac{\mathrm{d}}{\mathrm{d}t}\int_{\Omega}c\mathrm{d}\bm{\bm{x}}=0$ 
and $\frac{\mathrm{d}}{\mathrm{d}t}\int_{\Omega}\phi\mathrm{d}\bm{\bm{x}}=0$. 

The total energy consists of kinetic energy, mix entropy and the phase mixing energy
\begin{equation}\label{egy: total}
    E_{total}=E_{kin}+E_{ent}+E_{mix}
    =\frac{1}{2}\int_{\Omega}\rho|\bm{u}|^{2}\mathrm{d}\bm{x}+\int_{\Omega}RTc(\ln \frac{c}{c_{0}}-1)\mathrm{d}\bm{x}
    +\int_{\Omega}\lambda(G(\phi)+\frac{\gamma^{2}}{2}|\nabla\phi|^{2})\mathrm{d}\bm{x},
\end{equation}
where $G(\phi)=\frac{1}{4}(1-\phi^{2})^{2}$ is the double well potential. 
$R$ is the gas constant number, $T$ is the thermodynamic temperature, 
$c_{0}$ is the reference concentration,
$\lambda$ is the energy density, $\gamma$ is the thickness of the interface.  

Then according to the total energy, we could define the chemical potentials 
\begin{eqnarray}
\mu_c = \frac{\delta E}{\delta c} = RT \ln\frac{c}{c_0},\label{eqn: potential_c}\\
\mu_\phi=\frac{\delta E}{\delta \phi} = -\lambda \gamma^2\Delta\phi+\lambda G'(\phi).\label{eqn: potential_phi} 
\end{eqnarray}

  The dissipative functional is  composed of the dissipation due to fluid friction, irreversible mixing of the substance and irreversible mixing of two phases in bulk
\begin{equation}\label{def: dissipation}
    \Delta=\underbrace{\int_{\Omega}2\nu(\phi)|D_{\nu}|^{2}\mathrm{d}\bm{x}}_\text{friction} +
    \underbrace{\int_{\Omega}\frac{D_{\it{eff}}c}{RT}|\nabla\mu_{c}|^{2}\mathrm{d}\bm{x}}_\text{diffusion}
    +\underbrace{\int_{\Omega}\mathcal{M}|\nabla\mu_{\phi}|^{2}\mathrm{d}\bm{x}}_\text{mixing}. 
\end{equation}
where $\nu(\phi)$ is the viscosity for the fluid,  
$D_{\nu}=\frac{\nabla\bm{u}+(\nabla\bm{u})^{T}}{2}$ is the strain rate, $\mathcal{M}$ is the phenomenological mobility.

In order to model the restricted diffusion due to the permeability  of the interface,  the diffusion  coefficient $D_{\emph{eff}}$ is modelled in the following way
\begin{equation}\label{def: Deff}
    \frac 1 {D_{\it{eff}}} =
    \frac{(1-\phi^{2})^{2}}{A Kq\gamma}
    +\frac{1-\phi}{2D_{-}}
    +\frac{1+\phi}{2D_{+}}, 
\end{equation}
where $K$ is the permeability for the membrane, $A$ is a constant to be determined, $q=\frac{dQ}{dc}$ and 
$D_{+}$ and $D_{-}$ are the diffusion coefficients in domain 
$\Omega^+$ and 
$\Omega^-$ respectively.


The energy dissipative law \cite{eisenberg2010energy,xu2018osmosis,shen2020energy} states that without external force acting on the system, the changing rate of total energy equals the dissipation
\begin{equation}\label{eqn: dissipation}
    \frac{\mathrm{d}}{\mathrm{d}t}E_{total}=-\Delta. 
\end{equation}

Then the definition of total energy functional \eqref{egy: total} yields
\begin{eqnarray}\label{eqn: dedt}
\frac{dE_{tot}}{dt} &= &\frac{dE_{ent}}{dt} +\frac{dE_{kin}}{dt} +\frac{dE_{mix}}{dt} \nonumber\\
&=& I_1 +I_2 +I_3.
\end{eqnarray}
By using the definitions of chemical potential \eqref{eqn: potential_c} and conservation law \eqref{eqn: concertraction}, the first part yields 
\begin{align}\label{eqn: entropy dissipation}
I_1
&=\int_{\Omega}\mu_{c}(\frac{\partial c}{\partial t}+\bm{u}\cdot\nabla c)\mathrm{d}\bm{x}
\nonumber\\
&=-\int_{\Omega}\mu_{c}\nabla\cdot\bm{j}_{c}\mathrm{d}\bm{x}
\nonumber\\
&=-\int_{\partial\Omega}\mu_{c}\bm{j}_{c}\cdot\bm{n}\mathrm{d}\bm{x}
+\int_{\Omega}\nabla\mu_{c}\cdot\bm{j}_{c}\mathrm{d}\bm{x}
\nonumber\\
&=\int_{\Omega}\nabla\mu_{c}\cdot\bm{j}_{c}\mathrm{d}\bm{x},
\end{align}
where the nonflux boundary condition for $\bm{j}_c$ is used here. 
 
 For the second term, by using Eqs.\eqref{eqn: velocity} and \eqref{eqn: incompressible condition},  we have 
\begin{align}\label{eqn: kinetic dissipation}
\frac{\mathrm{d}}{\mathrm{d}t}E_{kin}
=&\frac{\mathrm{d}}{\mathrm{d}t}(\frac{1}{2}\int_{\Omega}\rho|\bm{u}|^{2}\mathrm{d}\bm{x})
\nonumber\\
=&\frac{1}{2}\int_{\Omega}\frac{\partial\rho}{\partial t}|\bm{u}|^{2}\mathrm{d}\bm{x}
+\int_{\Omega}\rho\bm{u}\cdot\frac{\partial \bm{u}}{\partial t}\mathrm{d}\bm{x}
\nonumber\\
=&\frac{1}{2}\int_{\Omega}\frac{\partial\rho}{\partial t}|\bm{u}|^{2}\mathrm{d}\bm{x}
+\int_{\Omega}\rho\bm{u}\cdot\frac{\textbf{D}\bm{u}}{\textbf{D}t}\mathrm{d}\bm{x}
+\int_{\Omega}\nabla\cdot(\rho\bm{u})\frac{|\bm{u}|^{2}}{2}\mathrm{d}\bm{x}
\nonumber\\
=&\int_{\Omega}\bm{u}\cdot(\nabla\cdot\sigma_{\eta}+\nabla\cdot\sigma_{\phi})\mathrm{d}\bm{x}
-\int_{\Omega}p\nabla\cdot\bm{u}\mathrm{d}\bm{x}
\nonumber\\
=&-\int_{\Omega}\nabla\bm{u}:(\sigma_{\eta}+\sigma_{\phi})\mathrm{d}\bm{x}
-\int_{\Omega}p\nabla\cdot\bm{u}\mathrm{d}\bm{x}. 
\end{align}
where pressure is induced as a Lagrange multiplier for incompressibility. 

The last term is calculated with Eqs\eqref{eqn: phase field} and \eqref{eqn: potential_phi}   
\begin{align}\label{eqn: mixing dissipation}
\frac{d}{dt}E_{mix} = &\frac{\mathrm{d}}{\mathrm{d}t}\int_{\Omega}(G(\phi)+\frac{\gamma^{2}}{2}|\nabla\phi|^{2})\mathrm{d}\bm{x}
\nonumber\\
=&\int_{\Omega}(G^{\prime}(\phi)\frac{\partial\phi}{\partial t}+\gamma^{2}\nabla\phi\cdot\nabla\frac{\partial\phi}{\partial t})\mathrm{d}\bm{x}
\nonumber\\
=&\int_{\Omega}G^{\prime}(\phi)\frac{\partial\phi}{\partial t}\mathrm{d}\bm{x}+\int_{\partial\Omega}\gamma^{2}\frac{\partial\phi}{\partial\bm{n}}\frac{\partial\phi}{\partial t}\mathrm{d}s-\int_{\Omega}\nabla\cdot(\gamma^{2}\nabla\phi)\frac{\partial\phi}{\partial t}\mathrm{d}\bm{x}
\nonumber\\
=&\int_{\Omega}(G^{\prime}(\phi)-\nabla\cdot(\gamma^{2}\nabla\phi))\frac{\partial\phi}{\partial t}\mathrm{d}\bm{x}
\nonumber\\
=&\int_{\Omega}\mu_{\phi}(\frac{\partial\phi}{\partial t}+\bm{u}\cdot\nabla\phi)\mathrm{d}\bm{x}
-\int_{\Omega}\mu_{\phi}\bm{u}\cdot\nabla\phi\mathrm{d}\bm{x}
\nonumber\\
=&-\int_{\Omega}\mu_{\phi}\nabla\cdot\bm{j}_{\phi}\mathrm{d}\bm{x}
-\int_{\Omega}\mu_{\phi}\bm{u}\cdot\nabla\phi\mathrm{d}\bm{x}
\nonumber\\
=&\int_{\Omega}\nabla\mu_{\phi}\cdot\bm{j}_{\phi}\mathrm{d}\bm{x}
-\int_{\partial\Omega}\mu_{\phi}\bm{j}_{\phi}\cdot\bm{n}\mathrm{d}s
-\int_{\Omega}(G(\phi)-\nabla\cdot(\gamma^{2}\nabla\phi))\bm{u}\cdot\nabla\phi\mathrm{d}\bm{x}
\nonumber\\
=&\int_{\Omega}\nabla\mu_{\phi}\cdot\bm{j}_{\phi}\mathrm{d}\bm{x}
-\int_{\Omega}\bm{u}\cdot \nabla G(\phi)\mathrm{d}\bm{x}
+\int_{\Omega}\gamma^{2}\Delta\phi\nabla\phi\cdot\bm{u}\mathrm{d}\bm{x}
\nonumber\\
=&\int_{\Omega}\nabla\mu_{\phi}\cdot\bm{j}_{\phi}\mathrm{d}\bm{x}
-\int_{\Omega}\bm{u}\cdot \nabla G(\phi)\mathrm{d}\bm{x}
+\int_{\Omega}\gamma^{2}(\nabla\cdot(\nabla\phi\otimes\nabla\phi)
-\frac{1}{2}\nabla|\nabla\phi|^{2})\cdot\bm{u}\mathrm{d}\bm{x}
\nonumber\\
=&\int_{\Omega}\nabla\mu_{\phi}\cdot\bm{j}_{\phi}\mathrm{d}\bm{x}
-\int_{\Omega}\bm{u}\cdot \nabla (G(\phi)+\frac{\gamma^{2}}{2}|\nabla\phi|^{2})\mathrm{d}\bm{x}
+\int_{\Omega}\gamma^{2}\nabla\cdot(\nabla\phi\otimes\nabla\phi)\cdot\bm{u}\mathrm{d}\bm{x}
\nonumber\\
=&\int_{\Omega}\nabla\mu_{\phi}\cdot\bm{j}_{\phi}\mathrm{d}\bm{x}
-\int_{\Omega}\nabla\cdot(\bm{u}(G(\phi)+\frac{\gamma^{2}}{2}|\nabla\phi|^{2}))\mathrm{d}\bm{x}
+\int_{\Omega}(G(\phi)+\frac{\gamma^{2}}{2}|\nabla\phi|^{2})\nabla\cdot\bm{u}\mathrm{d}\bm{x}
\nonumber\\
&+\int_{\Omega}\gamma^{2}\nabla\cdot(\nabla\phi\otimes\nabla\phi)\cdot\bm{u}\mathrm{d}\bm{x}
\nonumber\\
=&\int_{\Omega}\nabla\mu_{\phi}\cdot\bm{j}_{\phi}\mathrm{d}\bm{x}
-\int_{\partial\Omega}(G(\phi)+\frac{\gamma^{2}}{2}|\nabla\phi|^{2}))\bm{u}\cdot\bm{n}\mathrm{d}s
+\int_{\Omega}\gamma^{2}\nabla\cdot(\nabla\phi\otimes\nabla\phi)\cdot\bm{u}\mathrm{d}\bm{x}
\nonumber\\
=&\int_{\Omega}\nabla\mu_{\phi}\cdot\bm{j}_{\phi}\mathrm{d}\bm{x}
-\int_{\Omega}\gamma^{2}(\nabla\phi\otimes\nabla\phi):\nabla\bm{u}\mathrm{d}\bm{x},
\end{align}
where the nonflux boundary of $\bm{j}_{\phi}$ is used. 

Then  substituting above three equations to Eq. \eqref{eqn: dedt} yields 
\begin{align}\label{eqn: EVA}
    &\frac{\mathrm{d}}{\mathrm{d}t}E_{total}
    =\frac{\mathrm{d}}{\mathrm{d}t}E_{ent}
    +\frac{\mathrm{d}}{\mathrm{d}t}E_{kin}
    +\frac{\mathrm{d}}{\mathrm{d}t}E_{mix}
    \nonumber\\
    =&\int_{\Omega}\nabla\mu_{c}\cdot\bm{j}_{c}\mathrm{d}\bm{x}
    -\int_{\Omega}\nabla\bm{u}:(\sigma_{\nu}+\sigma_{\phi})\mathrm{d}\bm{x}
    -\int_{\Omega}p\nabla\cdot\bm{u}\mathrm{d}\bm{x}
    +\int_{\Omega}\nabla\mu_{\phi}\cdot\bm{j}_{\phi}\mathrm{d}\bm{x}
    -\int_{\Omega}\gamma^{2}(\nabla\phi\otimes\nabla\phi):\nabla\bm{u}\mathrm{d}\bm{x}
    \nonumber\\
    =&-\int_{\Omega}\frac{D_{\it{eff}}c}{RT}|\nabla\mu_{c}|^{2}\mathrm{d}\bm{x}
    -\int_{\Omega}\mathcal{M}|\nabla\mu_{\phi}|^{2}\mathrm{d}\bm{x}
    -\int_{\Omega}2\nu|D_{\nu}|^{2}\mathrm{d}\bm{x}. 
\end{align}
Comparing two sides of the above equation $\eqref{eqn: EVA}$ 
and calculating the functional deriative,  we obtain 
\begin{subequations}\label{eqn: flux}
    \begin{align}
        &\bm{j}_{\phi}=-\mathcal{M}\nabla\mu_{\phi}, 
        \label{eqn: flux_phi}\\
        &\bm{j}_{c}=-\frac{D_{\it{eff}}c}{RT}\nabla\mu_{c}, 
        \label{eqn: flux_c}\\
        &\sigma_{\nu}=2\nu D_{\nu}-p\textbf{I}, 
        \label{eqn: tensor_nu}\\
        &\sigma_{\phi}=-\lambda\gamma^{2}(\nabla\phi\otimes\nabla\phi).
        \label{eqn: tensor_phi}
    \end{align}
\end{subequations}

Then   the diffusive interface model for mass transport through semi-permeable membrane with restricted diffusion is summarized as follows
\begin{subequations}\label{eqn: main}
    \begin{align}
        &\frac{\textbf{D} \phi}{\textbf{D} t}= -\nabla\cdot\mathcal{M}\nabla \mu, 
        \label{eqn: phi}\\
        &\frac{\textbf{D} c}{\textbf{D} t}= -\nabla\cdot \bm{j}, 
        \label{eqn: c}\\
        &\rho\frac{\textbf{D} \bm{u}}{\textbf{D} t}= 
        \nabla\cdot(\nu\nabla\bm{u})-\nabla p
        -\lambda\gamma^{2}\nabla\cdot(\nabla\phi\otimes\nabla\phi), 
        \label{eqn: u}\\
        &\nabla\cdot\bm{u}=0, 
        \label{eqn: nabla_u}\\
        &\mu=-\lambda\gamma^{2}\nabla^{2}\phi+\lambda G^{\prime}(\phi), 
        \label{eqn: mu_phi}\\
        &\bm{j}=-D_{\it{eff}}\nabla c, 
        \label{eqn: j_c}\\
        &D_{\it{eff}}^{-1}=
        \frac{(1-\phi^{2})^{2}}{A K q\gamma}
        +\frac{1-\phi}{2D_{-}}
        +\frac{1+\phi}{2D_{+}}, 
        \label{eqn: D_eff}
    \end{align}
\end{subequations}
with boundary conditions  
\begin{equation}\label{eqn: bdc}
    \nabla\phi\cdot\bm{n}|_{\partial\Omega}=0, \quad
    \nabla c\cdot\bm{n}|_{\partial\Omega}=0, \quad
    \nabla\mu\cdot\bm{n}|_{\partial\Omega}=0, \quad
    \bm{u}|_{\partial\Omega}=0.
\end{equation} 

\subsection{Non-dimensionalization}

Now we introduce the dimensionless variables 
\begin{equation}\label{variables}
    \tilde{\bm{x}}=\frac{\bm{x}}{L}, \quad 
    \tilde{\bm{u}}=\frac{\bm{u}}{U}, \quad 
    \tilde{t}=\frac{t}{T}, \quad 
    \tilde{c}=\frac{c}{c_{0}}, \quad
    \tilde{K}=\frac{KL}{D_0},\quad
    \tilde{D} = \frac{D}{D_0},\quad 
    T=\frac{L}{U}. 
\end{equation} 
Here $L, ~T,~c_0,~U ~D_0 $ are  the characteristic length, time, concentration  velocity and diffusion constant.
For convenience, the tilde symbol will be removed in the dimensionless quantities, and the dimensionless system of  \eqref{eqn: main}  is given by

\begin{subequations}\label{eqn: dimensionaless system}
    \begin{align}
        &Re\big(\frac{\partial\bm{u}}{\partial t}
        +(\bm{u}\cdot\nabla)\bm{u}\big)+\nabla p
        =\nabla^{2}\bm{u}+\frac{1}{Ca}\mu\nabla\phi,
        \label{eqn: u}\\
        &\nabla\cdot\bm{u}=0, 
        \label{eqn: div_u}\\
        &\frac{\partial \phi}{\partial t}+(\bm{u}\cdot\nabla)\phi 
        =\nabla\cdot(\mathcal{M}\nabla\mu),
        \label{eqn: phi}\\
        &\mu=-\epsilon\nabla^{2}\phi+\frac{1}{\epsilon}\phi(\phi^{2}-1), 
        \label{eqn: mu_phi}\\
        &\frac{\partial c}{\partial t}+(\bm{u}\cdot\nabla)c
        =-\frac{1}{Pe}\nabla\cdot\bm{j},
        \label{eqn: c}\\
        &\bm{j}=-D_{\it{eff}}\nabla c, 
        \label{eqn: flux}\\
        & D_{\it{eff}}^{-1}=\frac{(1-\phi^{2})^{2}}{A K q \epsilon}
        +\frac{1-\phi}{2D^{-}}
        +\frac{1+\phi}{2D^{+}},
        \label{eqn: diffuse_coef}
    \end{align}
\end{subequations}
with the boundary condition 
\begin{equation}\label{eqn: bdc}
    \bm{u}|_{\partial\Omega}=0,\quad 
    \nabla\phi\cdot\bm{n}|_{\partial\Omega}=0,\quad 
    \nabla\mu\cdot\bm{n}|_{\partial\Omega}=0,\quad 
    \nabla c\cdot\bm{n}|_{\partial\Omega}=0. 
\end{equation}
with the dimensionless   parameters
\begin{equation}\label{num: Re_Ca}
Re=\frac{\rho UL}{\nu}, \quad 
Ca=\frac{\nu U}{\lambda\gamma}, \quad 
Pe=\frac{UL}{D_0}.
\end{equation}
\begin{remark}
During the dimensionless, we use the fact  that 
\begin{align}
    &\nabla\cdot(\nabla\phi\otimes\nabla\phi)
     \nonumber\\
    =&\nabla^{2}\phi\nabla\phi+\frac{1}{2}\nabla|\nabla\phi|^{2}
     \nonumber\\
     =&-\mu\nabla\phi+\frac{1}{2}\nabla\Big(|\nabla\phi|^{2}+\frac{1}{\epsilon}G(\phi)\Big), \nonumber
 \end{align}
 and redefine pressure function 
 \begin{equation}
     p=p+\frac{\epsilon}{2Ca}|\nabla\phi|^{2}+\frac{1}{2Ca}G(\phi). \nonumber
 \end{equation}
\end{remark}

\begin{theorem}
If $\phi$, $c$, $\bm{u}$ and $p$ are smooth solutions of the system $\eqref{eqn: dimensionaless system}$ 
with boundary condition $\eqref{eqn: bdc}$, then the following energy law is satisfied: 
\begin{align}\label{eqn: energy law}
    \frac{\mathrm{d}}{\mathrm{d}t}\mathcal{E}_{total}
    =-\int_{\Omega}\big(|\nabla\bm{u}|^{2}+\frac{\mathcal{M}}{Ca}|\nabla\mu|^{2}+D_{\it{eff}}c|\nabla\mu_{c}|^{2}\big)\mathrm{d}\bm{x}, 
\end{align}
where 
\begin{align}
    \mathcal{E}_{total}=\int_{\Omega}\Big(\frac{Re}{2}|\bm{u}|^{2}
    +\frac{1}{Ca}\big(\frac{\epsilon}{2}|\nabla\phi|^{2}+\frac{1}{4\epsilon}(1-\phi^{2})^{2}\big)
    +c\ln c\Big)\mathrm{d}\bm{x}.
\end{align}
\end{theorem}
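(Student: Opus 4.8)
The statement is the non-dimensional counterpart of the energy--dissipation law \eqref{eqn: dissipation} that was built into the model, so the plan is to reverse the energy--variational derivation. I would differentiate $\mathcal{E}_{total}$ in time, split it as $\tfrac{\mathrm{d}}{\mathrm{d}t}\mathcal{E}_{total}=\mathcal{I}_{kin}+\mathcal{I}_{mix}+\mathcal{I}_{ent}$ into kinetic, phase--mixing and entropy parts, and test each governing equation of \eqref{eqn: dimensionaless system} against its natural multiplier: the velocity $\bm{u}$ for the momentum balance, the chemical potential $\mu$ (with weight $1/Ca$) for the phase--field equation, and $\ln c$ --- equivalently the dimensionless chemical potential $\mu_c$, for which $\nabla\mu_c=\nabla c/c$ --- for the concentration equation. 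The hydrodynamic coupling terms then cancel pairwise and only the three dissipation integrals remain. Throughout, each boundary term produced by integration by parts is annihilated by exactly one of the conditions in \eqref{eqn: bdc}.

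For $\mathcal{I}_{kin}=\tfrac{\mathrm{d}}{\mathrm{d}t}\big(\tfrac{Re}{2}\int_\Omega|\bm{u}|^2\big)=Re\int_\Omega\bm{u}\cdot\partial_t\bm{u}$, I would insert $\partial_t\bm{u}$ from the momentum equation of \eqref{eqn: dimensionaless system}: the inertial term $Re\int_\Omega\bm{u}\cdot(\bm{u}\cdot\nabla)\bm{u}$ and the pressure term $\int_\Omega\bm{u}\cdot\nabla p$ vanish after integration by parts using $\nabla\cdot\bm{u}=0$ and $\bm{u}|_{\partial\Omega}=0$; the viscous term gives $\int_\Omega\bm{u}\cdot\nabla^2\bm{u}=-\int_\Omega|\nabla\bm{u}|^2$; and the capillary term leaves the coupling contribution $\tfrac{1}{Ca}\int_\Omega\mu\,\nabla\phi\cdot\bm{u}$. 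For $\mathcal{I}_{mix}$, differentiating $\tfrac{1}{Ca}\int_\Omega\big(\tfrac{\epsilon}{2}|\nabla\phi|^2+\tfrac{1}{4\epsilon}(1-\phi^2)^2\big)$ and integrating the gradient term by parts (using $\nabla\phi\cdot\bm{n}|_{\partial\Omega}=0$) turns the integrand into $\tfrac{1}{Ca}\mu\,\partial_t\phi$ by \eqref{eqn: mu_phi}; substituting $\partial_t\phi=\nabla\cdot(\mathcal{M}\nabla\mu)-\bm{u}\cdot\nabla\phi$ from \eqref{eqn: phi} and integrating the mobility term by parts (using $\nabla\mu\cdot\bm{n}|_{\partial\Omega}=0$) gives $-\tfrac{1}{Ca}\int_\Omega\mathcal{M}|\nabla\mu|^2-\tfrac{1}{Ca}\int_\Omega\mu\,\bm{u}\cdot\nabla\phi$. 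The two coupling integrals $\tfrac{1}{Ca}\int_\Omega\mu\,\nabla\phi\cdot\bm{u}$ and $-\tfrac{1}{Ca}\int_\Omega\mu\,\bm{u}\cdot\nabla\phi$ cancel identically; this cancellation --- forced by the precise form $\tfrac{1}{Ca}\mu\nabla\phi$ of the phase stress in the momentum equation --- is the one genuinely structural point of the argument.

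For $\mathcal{I}_{ent}$, I would compute $\tfrac{\mathrm{d}}{\mathrm{d}t}\int_\Omega c\ln c=\int_\Omega(1+\ln c)\,\partial_t c$ and substitute $\partial_t c=-\bm{u}\cdot\nabla c-\tfrac{1}{Pe}\nabla\cdot\bm{j}$ from \eqref{eqn: c}. The transport term vanishes because $(1+\ln c)\nabla c=\nabla(c\ln c)$, whence $\int_\Omega(1+\ln c)\,\bm{u}\cdot\nabla c=\int_\Omega\bm{u}\cdot\nabla(c\ln c)=0$ by $\nabla\cdot\bm{u}=0$ and $\bm{u}|_{\partial\Omega}=0$. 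For the diffusive term, $\bm{j}\cdot\bm{n}|_{\partial\Omega}=-D_{\it{eff}}\,\nabla c\cdot\bm{n}|_{\partial\Omega}=0$, so integration by parts yields $\tfrac{1}{Pe}\int_\Omega\nabla(\ln c)\cdot\bm{j}=-\tfrac{1}{Pe}\int_\Omega\tfrac{D_{\it{eff}}}{c}|\nabla c|^2$, and rewriting $\tfrac{D_{\it{eff}}}{c}|\nabla c|^2=D_{\it{eff}}\,c\,|\nabla\mu_c|^2$ through $\nabla\mu_c=\nabla c/c$ gives exactly the diffusive dissipation integral of \eqref{eqn: energy law} (carrying the factor $1/Pe$ inherited from \eqref{eqn: c}). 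Adding $\mathcal{I}_{kin}+\mathcal{I}_{mix}+\mathcal{I}_{ent}$ and using the cancellation above then delivers the stated identity.

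The step requiring the most care is $\mathcal{I}_{ent}$: it relies on the positivity $c>0$ of the assumed smooth solution so that $\ln c$ and $\mu_c$ are well defined, on the identity $\nabla(c\ln c)=(1+\ln c)\nabla c$ to remove the convective contribution, and on the conversion of $\tfrac{D_{\it{eff}}}{c}|\nabla c|^2$ into $D_{\it{eff}}\,c\,|\nabla\mu_c|^2$. It is also worth recording that $D_{\it{eff}}>0$ pointwise --- immediate from \eqref{eqn: diffuse_coef}, since $K,q,D^{\pm}>0$ and $(1-\phi^2)^2\ge 0$ --- so that the right-hand side of \eqref{eqn: energy law} is genuinely nonpositive and the identity indeed expresses dissipation. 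The remainder of the proof is a routine sequence of integrations by parts in which, as noted, each boundary contribution is eliminated by precisely one of the conditions in \eqref{eqn: bdc}.
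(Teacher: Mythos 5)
Your proposal is correct and follows essentially the same route as the paper's (one-line) proof: testing each equation of \eqref{eqn: dimensionaless system} with its natural multiplier ($\bm{u}$, $\mu/Ca$, $\mu_c=\ln c$), integrating by parts with \eqref{eqn: bdc}, and cancelling the capillary/advective coupling terms. The only point of divergence is the factor $1/Pe$ that you correctly track on the diffusive dissipation term $\int_\Omega D_{\it{eff}}\,c\,|\nabla\mu_c|^2\,\mathrm{d}\bm{x}$, which the theorem as stated omits (harmlessly, since $Pe=1$ in the numerical experiments).
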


\begin{proof}
By taking the $L^{2}$ inner product of $\eqref{eqn: u}$, $\eqref{eqn: phi}$, $\eqref{eqn: mu_phi}$, 
$\eqref{eqn: c}$ and $\eqref{eqn: flux}$ with $\bm{u}$, $\mu$, $\frac{\partial \phi}{\partial t}$, 
$\mu_{c}$ and$\frac{\partial c}{\partial t}$ respectively, one obtains immediately $\eqref{eqn: energy law}$. 
\end{proof}
\section{Sharp interface limit}\label{sec: sharp}

In this section, a detailed asymptotic analysis is presented by using the results in \cite{xu2017sharp} to show the sharp interface limits of the obtained system  \eqref{eqn: dimensionaless system}. Here we assume the viscosity is constant and   $\mathcal{M} = \alpha\epsilon $, where $\alpha$ is a constant independent of $\epsilon$.

\subsection{Outer expansions} 
Far from the two-phase interface $\Gamma$, we use the following ansatz:
\begin{subequations}\label{eqns: outer expansions}
    \begin{align}
        &\bm{u}_{\epsilon}^{\pm}=
        \bm{u}_{0}^{\pm}+\epsilon\bm{u}_{1}^{\pm}+\epsilon^{2}\bm{u}_{2}^{\pm}+o(\epsilon^{2}),
        \label{out: u}\\
        &p_{\epsilon}^{\pm}=
        p_{0}^{\pm}+\epsilon p_{1}^{\pm}+\epsilon^{2} p_{2}^{\pm}+o(\epsilon^{2}),
        \label{out: p}\\
        &\phi_{\epsilon}^{\pm}=
        \phi_{0}^{\pm}+\epsilon \phi_{1}^{\pm}+\epsilon^{2} \phi_{2}^{\pm}+o(\epsilon^{2}), 
        \label{out: phi}\\
        &\mu_{\epsilon}^{\pm}=
        \epsilon^{-1}\mu_{0}^{\pm}+\mu_{1}^{\pm}+\epsilon \mu_{2}^{\pm}+o(\epsilon). 
        \label{out: mu_phi}\\
        &c_{\epsilon}^{\pm}=
        c_{0}^{\pm}+\epsilon c_{1}^{\pm}+\epsilon^{2} c_{2}^{\pm}+o(\epsilon^{2}), 
        \label{out: c}\\
        &\bm{j}_{\epsilon}^{\pm}=
        \bm{j}_{0}^{\pm}+\epsilon \bm{j}_{1}^{\pm}+\epsilon^{2} \bm{j}_{2}^{\pm}+o(\epsilon^{2}), 
        \label{out: j}
    \end{align}
\end{subequations}
Sine the concentration $c_{\epsilon}^{\pm}$  does not effect equations of $\phi$ and $\bm{u}$ ,   we directly call  the results in \cite{xu2017sharp},  
\begin{subequations}
 \begin{align}
     \phi_{0}^{\pm}= C_0^\pm,\quad\mbox{in}~ \Omega^{\pm},\\
     \mu_0^\pm = (C_0^\pm)^3-C_0^\pm,\label{out: mu}  \\
       \nabla\cdot\bm{u}_{0}^{\pm}=0,\label{out: nabla_u}\\ 
         Re\big(\frac{\partial\bm{u}_{0}^{\pm}}{\partial t}
    +(\bm{u}_{0}^{\pm}\cdot\nabla)\bm{u}_{0}^{\pm}\big)
    +\nabla p_{0}^{\pm}
    =\nabla^{2}\bm{u}_{0}^{\pm}
    +\frac{1}{Ca}\mu_{0}^{\pm}\nabla\phi_{1}^{\pm}. \label{out: ns}
      \end{align}
\end{subequations}

Substituting $\eqref{out: c}$,$\eqref{out: j}$  
into Eq. \eqref{eqn: c} yields 
we have 
\begin{align}\label{out: j1}
    &[2D^{+}D^{-}(1-(\phi_{0}^{\pm}+\epsilon \phi_{1}^{\pm}
    +\epsilon^{2} \phi_{2}^{\pm}+o(\epsilon^{2}))^{2})^{2}
    +A K q \epsilon (D^{+}+D^{-})
    \nonumber\\
    &
    \qquad +A K q \epsilon (D^{-}-D^{+})(\phi_{0}^{\pm}+\epsilon \phi_{1}^{\pm}
    +\epsilon^{2} \phi_{2}^{\pm}+o(\epsilon^{2}))]
    (\bm{j}_{0}^{\pm}+\epsilon \bm{j}_{1}^{\pm}
    +\epsilon^{2} \bm{j}_{2}^{\pm}+o(\epsilon^{2}))
    \nonumber\\
    &
    =-2A K \epsilon D^{+}D^{-}
    [q(c_{0}^{\pm})
    +\epsilon q^{\prime}(c_{0}^{\pm})c_{1}^{\pm}
    +\epsilon^{2}(\frac{1}{2}q^{\prime\prime}(c_{0}^{\pm})(c_{1}^{\pm})^{2}
    +q^{\prime}(c_{0}^{\pm})c_{2}^{\pm})
    +o(\epsilon^{2})]
    \nabla (c_{0}^{\pm}+\epsilon c_{1}^{\pm}
    +\epsilon^{2} c_{2}^{\pm}+o(\epsilon^{2})), 
\end{align}
The leading order term of the above equation  is
\begin{equation}
    (1-(\phi_{0}^{\pm})^{2})^{2}\bm{j}_{0}^{\pm}=0, 
\end{equation}
and the next order term is 
\begin{equation}\label{eq: c0j0}
    A Kq(c_{0}^{\pm})[(D^{+}+D^{-})+(D^{-}-D^{+})\phi_{0}^{\pm}]\bm{j}_{0}^{\pm}
    +2D^{+}D^{-}(1-(\phi_{0}^{\pm})^{2})^{2}\bm{j}_{1}^{\pm}
    =-2A K D^{+}D^{-}q(c_{0}^{\pm})\nabla c_{0}^{\pm}.
\end{equation}

The leading order of \eqref{eqn: c} gives us that 
\begin{equation}
    \frac{\partial c^{\pm}_{0}}{\partial t}+\bm{u}^{\pm}_{0}\cdot\nabla c^{\pm}_{0}
    =-\frac{1}{Pe}\nabla\cdot\bm{j}^{\pm}_{0}. 
\end{equation}
 
\subsection{Inner expansions}
We first introduce the signed distance function $d\left(\bm{x}\right)$ to the interface $\Gamma$. 
Immediately, we have $\nabla d=\bm{n}$. 
After defining a new rescaled variable 
\begin{equation}
    \xi=\frac{d\left(\bm{x}\right)}{\epsilon}, 
\end{equation}
for any scalar function $f\left(\bm{x}\right)$, we can rewrite it as 
\begin{equation}
    f\left(\bm{x}\right)=\tilde{f}\left(\bm{x},\xi\right), 
\end{equation}
and the relevant operators are
\begin{subequations}
    \begin{align}
        &\nabla f\left(\bm{x}\right)=
        \nabla_{\bm{x}}\tilde{f}+\epsilon^{-1}\partial_{\xi}\tilde{f}\bm{n},
        \\
        &\nabla^{2} f\left(\bm{x}\right)
        =\nabla^{2}_{\bm{x}}\tilde{f}+\epsilon^{-1}\partial_{\xi}\tilde{f}K
        +2\epsilon^{-1}\left(\bm{n}\cdot\nabla_{\bm{x}}\right)\partial_{\xi}\tilde{f}
        +\epsilon^{-2}\partial_{\xi\xi}\tilde{f},
        \\
        &\partial_{t}f=\partial_{t}\tilde{f}+\epsilon^{-1}\partial_{\xi}\tilde{f}\partial_{t}d, 
    \end{align}
\end{subequations}
and for a vector function $\tilde{\bm{g}}\left(\bm{x}\right)$, we have 
\begin{equation}
    \nabla \cdot \tilde{\bm{g}}\left(\bm{x}\right)=
    \nabla_{\bm{x}}\cdot\tilde{\bm{g}}+\epsilon^{-1}\partial_{\xi}\tilde{\bm{g}}\cdot\bm{n},
\end{equation}

Here the $\nabla_{\bm{x}}$ and $\nabla^{2}_{\bm{x}}$ stand for the gradient and Laplace 
with respect to $\bm{x}$, respectively. 
And we use the fact that $\nabla_{\bm{x}}\cdot\bm{n}=\kappa$. 
$\kappa\left(\bm{x}\right)$ for $\bm{x}\in\Gamma\left(t\right)$ is 
the mean curvature of the interface.
In the inner region, we assume that 
\begin{subequations}\label{eqn: inner expansions}
    \begin{align}
        &\tilde{\bm{u}}_{\epsilon}
        =\tilde{\bm{u}}_{0}+\epsilon\tilde{\bm{u}}_{1}
        +\epsilon^{2}\tilde{\bm{u}}_{2}+o\left(\epsilon^{2}\right),
        \\
        &\tilde{\phi}_{\epsilon}
        =\tilde{\phi}_{0}+\epsilon\tilde{\phi}_{1}
        +\epsilon^{2}\tilde{\phi}_{2}+o\left(\epsilon^{2}\right),
        \\
        &\tilde{p}_{\epsilon}
        =\tilde{p}_{0}+\epsilon\tilde{p}_{1}
        +\epsilon^{2}\tilde{p}_{2}+o\left(\epsilon^{2}\right),
        \\
        &\tilde{c}_{\epsilon}
        =\tilde{c}_{0}+\epsilon\tilde{c}_{1}
        +\epsilon^{2}\tilde{c}_{2}+o\left(\epsilon^{2}\right),
        \\
        &\tilde{\bm{j}}_{\epsilon}
        =\tilde{\bm{j}}_{0}+\epsilon\tilde{\bm{j}}_{1}
        +\epsilon^{2}\tilde{\bm{j}}_{2}+o\left(\epsilon^{2}\right),
        \\
        &\tilde{\mu}_{\epsilon}
        =\epsilon^{-1}\tilde{\mu}_{0}+\tilde{\mu}_{1}
        +\epsilon\tilde{\mu}_{2}+o\left(\epsilon\right). 
    \end{align}
\end{subequations}
And we have 
\begin{equation}\label{in: h}
    q(\tilde{c}_{\epsilon}^{\pm})
    =q(\tilde{c}_{0}^{\pm})
    +\epsilon q^{\prime}(\tilde{c}_{0}^{\pm})\tilde{c}_{1}^{\pm}
    +\epsilon^{2}(\frac{1}{2}q^{\prime\prime}(\tilde{c}_{0}^{\pm})(\tilde{c}_{1}^{\pm})^{2}
    +q^{\prime}(\tilde{c}_{0}^{\pm})\tilde{c}_{2}^{\pm})
    +o(\epsilon^{2}).
\end{equation}

Then the results in \cite{xu2017sharp} show that

\begin{equation}
    \tilde{\phi}_{0}=\tanh\frac{\xi}{\sqrt{2}}. 
\end{equation}
By the matching condition $\lim\limits_{\xi\to\pm\infty}\tilde{\phi}_{0}=\phi_{0}^{\pm}$,
we have 
\begin{equation}\label{eq: phi0}
    \phi_{0}^{\pm}=\pm 1,~\mbox{in}~\Omega^{\pm}. 
\end{equation}
This will lead to $\mu_{0}^{\pm}=0$. 
Therefore the equation $\eqref{out: ns}$ is reduced to 
\begin{equation}
    Re\big(\frac{\partial\bm{u}_{0}^{\pm}}{\partial t}
    +(\bm{u}_{0}^{\pm}\cdot\nabla)\bm{u}_{0}^{\pm}\big)
    +\nabla p_{0}^{\pm}
    =\nabla^{2}\bm{u}_{0}^{\pm}. 
\end{equation}

Also, thanks to Eq. \eqref{eq: phi0},  Eq. \eqref{eq: c0j0} yields 

\begin{equation}
    \bm{j}_{0}^{\pm}=-D^{\pm}\nabla c_{0}^{\pm}. 
\end{equation}
with interface conditions
\begin{subequations}
    \begin{align}
     \partial_{t}d+\bm{u}_{0}\cdot\bm{n}=0\label{eq:dcons}\\ V_{n}=\bm{u}_{0}\cdot\bm{n},\\
        [\![\bm{u}_{0}]\!]=0,\\
            [\![-p_{0}\bm{n}+(\bm{n}\cdot\nabla)\bm{u}_{0}]\!]=\frac{1}{Ca}\sigma\kappa\bm{n},
    \end{align}
\end{subequations}
where $V_n$ isthe normal velocity of the interface $\Gamma$, $
    \sigma=\int_{-\infty}^{\infty}(\partial_{\xi}\tilde{\phi}_{0})^{2}\mathrm{d}\xi
    =\frac{2\sqrt{2}}{3} $ is the surface tension.

The concentration function can be transformed into the following form: 
\begin{subequations}
    \begin{align}
        &\partial_{t}\tilde{c}_{\epsilon}
        +\epsilon^{-1}\partial_{\xi}\tilde{c}_{\epsilon}\partial_{t}d
        +\tilde{\bm{u}}_{\epsilon}\cdot(\nabla_{\bm{x}}\tilde{c}_{\epsilon}
        +\epsilon^{-1}\bm{n}\partial_{\xi}\tilde{c}_{\epsilon})
        =-\frac{1}{Pe}(\nabla_{\bm{x}}\cdot\tilde{\bm{j}}_{\epsilon}
        +\epsilon^{-1}\partial_{\xi}\tilde{\bm{j}}_{\epsilon}\cdot\bm{n}),
        \label{in: c}\\
        &\tilde{\bm{j}}_{\epsilon}
        =-\frac{2A K \epsilon D^{+}D^{-}q(\tilde{c}_{\epsilon})}
        {2D^{+}D^{-}(1-\tilde{\phi}_{\epsilon}^{2})^{2}+A K q \epsilon (D^{+}+D^{-})
        +A K q \epsilon (D^{-}-D^{+})\tilde{\phi}_{\epsilon}}
        (\nabla_{\bm{x}}\tilde{c}_{\epsilon}
        +\epsilon^{-1}\partial_{\xi}\tilde{c}_{\epsilon}\bm{n}),\label{in: j}
    \end{align}
\end{subequations}
The leading order term of equation $\eqref{in: c}$ is 
\begin{equation}
    \partial_{\xi}\tilde{c}_{0}\partial_{t}d
    +\bm{\tilde{u}}_{0}\cdot\bm{n}\partial_{\xi}\tilde{c}_{0}
    =-\frac{1}{Pe}\partial_{\xi}\tilde{\bm{j}}_{0}\cdot\bm{n}, 
\end{equation}
with the application of equation $\eqref{eq:dcons}$, we have 
\begin{equation}\label{jump: j0}
    -\partial_{\xi}\tilde{\bm{j}}_{0}\cdot\bm{n}=0 
\end{equation}
which means $\tilde{\bm{j}}_{0}\cdot\bm{n}=\mbox{const}$. 

Integrating the equation $\eqref{jump: j0}$ in $(-\infty,\infty)$ we have 
\begin{equation}\label{eqn: jump of flux}
    [\![\bm{j}_{0}\cdot\bm{n}]\!]=-
    \int_{-\infty}^{\infty}\partial_{\xi}\tilde{\bm{j}}_{0}\mathrm{d}\xi\cdot\bm{n}=0. 
\end{equation}
So the flux of concentration is continuous across the interface. 

For the equation $\eqref{in: j}$, we have 
\begin{equation}
    (2D^{+}D^{-}(1-\tilde{\phi}_{\epsilon}^{2})^{2}+A K q \epsilon (D^{+}+D^{-})
    +A K q \epsilon (D^{-}-D^{+})\tilde{\phi}_{\epsilon})\tilde{\bm{j}}_{\epsilon}
    =-2D^{+}D^{-}A K \epsilon q(\tilde{c}_{\epsilon})
    (\nabla_{\bm{x}}\tilde{c}_{\epsilon}
    +\epsilon^{-1}\partial_{\xi}\tilde{c}_{\epsilon}).\nonumber 
\end{equation}
and the leading order term gives us that 
\begin{equation}
    (1-\tilde{\phi}_{0}^{2})^{2}\tilde{\bm{j}}_{0}\cdot\bm{n}
    =-A K q(\tilde{c}_{0}) \partial_{\xi}\tilde{c}_{0}. 
\end{equation}
Integrating this equation in $(-\infty,\infty)$ we have 
\begin{align}
    AK[\![Q(c_{0})]\!]=
    &AK\int_{-\infty}^{+\infty}q(\tilde{c}_{0})\partial_{\xi}\tilde{c}_{0}\mathrm{d}\xi
    \nonumber\\
    =&\int_{-\infty}^{+\infty}(1-\tilde{\phi}_{0}^{2})^{2}\tilde{\bm{j}}_{0}\cdot\bm{n}\mathrm{d}\xi 
    \nonumber\\
    =&\bm{j}_{0}\cdot\bm{n}\int_{-\infty}^{+\infty}(1-\tanh^{2}\frac{\xi}{\sqrt{2}})^{2}\mathrm{d}\xi. 
\end{align} 
By defining 
\begin{equation}
    A=\int_{-\infty}^{+\infty}(1-\tanh^{2}\frac{\xi}{\sqrt{2}})^{2}\mathrm{d}\xi
    =\frac{4\sqrt{2}}{3}=2\sigma. 
\end{equation} 
we obtain 
\begin{equation}
    \bm{j}_{0}\cdot\bm{n}=K[\![Q(c_{0})]\!]. 
\end{equation}

Combining the above analysis, at the leading order, we obtain the following results: 
\begin{subequations}\label{eq: sharpinterfaceresults}
    \begin{align}
        &Re\big(\frac{\partial\bm{u}_{0}^{\pm}}{\partial t}
        +(\bm{u}_{0}^{\pm}\cdot\nabla)\bm{u}_{0}^{\pm}\big)
        +\nabla p_{0}^{\pm}
        =\nabla^{2}\bm{u}_{0}^{\pm},\quad \mbox{in}~\Omega_{\pm},
        \\
        &\nabla\cdot\bm{u}_{0}^{\pm}=0,\quad \mbox{in}~\Omega_{\pm}, 
        \\
        &\frac{\partial c_{0}^{\pm}}{\partial t}
        +(\bm{u}_{0}^{\pm}\cdot\nabla)c_{0}^{\pm}
        =\frac{1}{Pe}\nabla\cdot (D^\pm \nabla c^\pm),\quad \mbox{in}~\Omega_{\pm}, 
        \\
        &[\![\bm{u}_{0}]\!]=0, \quad \mbox{on}~\Gamma,
        \\
        &-D^+ \nabla c^+\cdot\bm{n}=-D^- \nabla c^-\cdot\bm{n}=K[\![Q(c_{0})]\!], \quad \mbox{on}~\Gamma,
        \\
        &[\![-p_{0}\bm{n}+(\bm{n}\cdot\nabla)\bm{u}_{0}]\!]=
        \frac{1}{Ca}\sigma\kappa\bm{n}, \quad \mbox{on}~\Gamma,
        \\
        &V_{n}=\bm{u}_{0}\cdot\bm{n},\quad \mbox{on}~\Gamma. 
    \end{align}
\end{subequations}
This illustrates that our diffusive interface model converges to the sharp interface model for mass transport with constrict diffusion condition \eqref{eqn: flux_c}. 

\section{Numerical Method}\label{sec: scheme}

The system $\eqref{eqn: dimensionaless system}$ is a totally nonlinearly coupled model. 
In this section, we focus on developing an unconditionally energy stable numerical scheme 
for our proposed mathematical model based on the stabilization method
\cite{Xu2006Stability,Shen2010stability,Zhu1999Coarsening,Shen2015Stability,Yang2010Stability}.

In order to assure the conservation of volume of the numerical scheme, we need to rewrite some terms of system $\eqref{eqn: dimensionaless system}$, 
taking into account the following relations: 
\begin{equation}
    \left\{
        \begin{aligned}
            &\mu\nabla\phi=\nabla(\phi\mu)-\phi\nabla\mu,\\
            &\bm{u}\cdot\nabla\phi=\nabla\cdot(\phi\bm{u}).
        \end{aligned}
    \right.
\end{equation}
Then, we redefine the pressure therm as 
\begin{equation}
    \tilde{p}=p-\frac{1}{Ca}\phi\mu.
\end{equation}
For simplicity of notation, we rewrite $p$ instead of $\tilde{p}$. 

Given initial conditions $\phi^{0}$, $\mu^{0}$, $\bm{u}^{0}$, $p^{0}$ and $c^{0}$, 
we compute ($\phi^{n+1}$, $\mu^{n+1}$, $\tilde{\bm{u}}^{n+1}$, $\bm{u}^{n+1}$, $p^{n+1}$, $c^{n+1}$) 
for $n\geq 0$ by the following steps.

{\bf Step 1.} We solve phase field variable $\phi^{n+1}$ by the following scheme with the help of 
Stabilization method and Navier-Stokes equation by pressure correction method
\cite{Chorin1968NS,Shen2006Projection}: 
\begin{subequations}\label{num: step1}
    \begin{align}
        &Re\big(\frac{\tilde{\bm{u}}^{n+1}-\bm{u}^{n}}{\delta t}
        +(\bm{u}^{n}\cdot\nabla)\tilde{\bm{u}}^{n+1}\big)+\nabla p^{n}
        =\Delta\tilde{\bm{u}}^{n+1}
        -\frac{1}{Ca}\phi^{n}\nabla\mu^{n+1},\label{num: tu}
        \\
        &\frac{\phi^{n+1}-\phi^{n}}{\delta t}
        +\nabla\cdot(\tilde{\bm{u}}^{n+1}\phi^{n})
        =\nabla\cdot(\mathcal{M}\nabla\mu^{n+1}),
        \label{num: phi}\\
        &\mu^{n+1}=-\epsilon\Delta\phi^{n+1}
        +\frac{s}{\epsilon}(\phi^{n+1}-\phi^{n})
        +\frac{1}{\epsilon}G^{\prime}(\phi^{n}), 
        \label{num: mu_phi}
    \end{align}
\end{subequations}
with the boundary condition 
\begin{equation}
    \nabla\phi^{n+1}\cdot\bm{n}|_{\partial\Omega}=0,
    \quad 
    \nabla\mu^{n+1}\cdot\bm{n}|_{\partial\Omega}=0, 
    \quad 
    \tilde{\bm{u}}^{n+1}|_{\partial\Omega}=0. 
\end{equation}

{\bf Step 2.} Projection step: 
\begin{subequations}\label{num: step2}
    \begin{align}
        &Re\frac{\bm{u}^{n+1}-\tilde{\bm{u}}^{n+1}}{\delta t}+\nabla(p^{n+1}-p^{n})=0,
        \label{num: u}\\
        &\nabla\cdot\bm{u}^{n+1}=0, \label{num: nabla_u}
    \end{align}
\end{subequations}
with boundary condition 
\begin{equation}
    \bm{u}^{n+1}|_{\partial\Omega}=0. 
\end{equation}

{\bf Step 3.} $c^{n+1}$ will be updated by the following fully implicit Euler scheme: 
\begin{subequations}\label{num: step4}
    \begin{align}
        &\frac{c^{n+1}-c^{n}}{\delta t}+(\bm{u}^{n+1}\cdot\nabla)c^{n+1}
        =-\nabla\cdot\bm{j}^{n+1},
        \label{num: c}\\
        &\bm{j}^{n+1}=-D^{n+1}c^{n+1}\nabla\mu_{c}^{n+1}, 
        \label{num: flux}\\
        &\mu_{c}^{n+1}= \ln c^{n+1} ,
        \label{num: mu_c}\\
        &\frac{1}{D^{n+1}}=
        \frac{(1-(\phi^{n+1})^{2})^{2}}{AK q(C^{n}) \epsilon}
        +\frac{1-\phi^{n+1}}{2D^{-}}+\frac{1+\phi^{n+1}}{2D^{+}}, 
    \end{align}
\end{subequations}
with boundary condtion 
\begin{equation}\label{num: c_bdc}
    \nabla c^{n+1}\cdot\bm{n}|_{\partial\Omega}=0,
    \quad 
    \nabla\mu_{c}^{n+1}\cdot\bm{n}|_{\partial\Omega}=0. 
\end{equation}

\begin{theorem}
    System $\eqref{num: step1}-\eqref{num: c_bdc}$ is uniquely solvable, 
    unconditionally stable and obey the following discrete energy law: 
    \begin{align}
        &\frac{Re}{2}(\|\bm{u}^{n+1}\|^{2}
        -\|\bm{u}^{n}\|^{2})
        +\frac{\delta t^{2}}{2Re}(\|\nabla p^{n+1}\|^{2}-\|\nabla p^{n}\|^{2})
        \nonumber\\
        &+\frac{1}{Ca}\big(\frac{\epsilon}{2}(\|\nabla\phi^{n+1}\|^{2}-\|\nabla\phi^{n}\|^{2})
        +\frac{1}{\epsilon}(G(\phi^{n+1})-G(\phi^{n}),1)\big)
        \nonumber\\
        &+(c^{n+1}\ln c^{n+1}-c^{n}\ln c^{n},1)
        \nonumber\\
        =&-\delta t\|\nabla\tilde{\bm{u}}^{n+1}\|^{2}
        -\delta t\frac{\mathcal{M}}{Ca}\|\nabla\mu^{n+1}\|^{2}
        -\delta t\int_{\Omega}D^{n+1}c^{n+1}|\nabla\mu_{c}^{n+1}|^{2}\mathrm{d}\bm{x}, 
    \end{align}
    where $\|\cdot\|$ denotes the discrete $L^{2}$ norm in domain $\Omega$. 
\end{theorem}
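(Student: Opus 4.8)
The plan is to prove the three assertions — unique solvability, unconditional stability, and the discrete energy relation — by handling the three fractional steps in turn, mimicking at the discrete level the continuous energy computation of Section~\ref{sec: model}.

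\textbf{Solvability.} For Step~1 I would first use \eqref{num: mu_phi} to eliminate $\mu^{n+1}$ in favour of $\phi^{n+1}$ (everything else there is data from time level $n$), reducing \eqref{num: step1} to a linear, coupled Oseen--Cahn--Hilliard-type system for $(\tilde{\bm u}^{n+1},\phi^{n+1})$. Since $\bm u^n$ is divergence free (true for $n=0$ by assumption and propagated by Step~2), the transport term is skew-symmetric and the underlying bilinear form is coercive on $\bm H^1_0(\Omega)\times H^1(\Omega)$; Lax--Milgram (or, after discretization, the observation that the homogeneous problem forces $\tilde{\bm u}^{n+1}\equiv 0$, $\nabla\mu^{n+1}\equiv 0$, hence $\phi^{n+1}\equiv 0$ and $\mu^{n+1}\equiv 0$) gives existence and uniqueness. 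Step~2 is the standard pressure-correction projection: taking the divergence of \eqref{num: u} and using \eqref{num: nabla_u} gives a Poisson problem for $p^{n+1}-p^n$ with homogeneous Neumann data (compatible because $\int_{\partial\Omega}\tilde{\bm u}^{n+1}\cdot\bm n=0$), uniquely solvable up to a constant, after which $\bm u^{n+1}$ is determined. For Step~3, I would note that $D^{n+1}c^{n+1}\nabla\mu_c^{n+1}=D^{n+1}\nabla c^{n+1}$ since $\mu_c^{n+1}=\ln c^{n+1}$, so \eqref{num: step4} is in fact a \emph{linear} advection--diffusion problem for $c^{n+1}$ whose coefficient $D^{n+1}$ depends only on the already-computed $\phi^{n+1}$ and on $C^n$; provided $D^{n+1}>0$ (which holds when $q(C^n)>0$ and $|\phi^{n+1}|\le 1$, or more generally under a sign condition I would record as a hypothesis), coercivity gives unique solvability, and a discrete maximum principle propagates $c^n>0\Rightarrow c^{n+1}>0$ so that $\ln c^{n+1}$ and the entropy are well defined.

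\textbf{Energy estimate, Steps 1--2.} I would pair \eqref{num: tu} with $\delta t\,\tilde{\bm u}^{n+1}$, \eqref{num: phi} with $\tfrac{\delta t}{Ca}\mu^{n+1}$, and \eqref{num: mu_phi} with $-\tfrac1{Ca}(\phi^{n+1}-\phi^n)$ in $L^2(\Omega)$, then add. The transport term $((\bm u^n\cdot\nabla)\tilde{\bm u}^{n+1},\tilde{\bm u}^{n+1})$ vanishes; the coupling term $\tfrac{\delta t}{Ca}(\phi^n\nabla\mu^{n+1},\tilde{\bm u}^{n+1})$ from the momentum equation cancels against $\tfrac{\delta t}{Ca}(\nabla\cdot(\tilde{\bm u}^{n+1}\phi^n),\mu^{n+1})$ after integration by parts; and the two $\pm\tfrac1{Ca}(\phi^{n+1}-\phi^n,\mu^{n+1})$ terms cancel. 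Applying $2(a-b,a)=\|a\|^2-\|b\|^2+\|a-b\|^2$ to the discrete time derivatives produces the increments $\tfrac{Re}{2}(\|\tilde{\bm u}^{n+1}\|^2-\|\bm u^n\|^2)$ and $\tfrac{\epsilon}{2Ca}(\|\nabla\phi^{n+1}\|^2-\|\nabla\phi^n\|^2)$ together with the non-negative remainders $\tfrac{Re}{2}\|\tilde{\bm u}^{n+1}-\bm u^n\|^2$, $\tfrac{\epsilon}{2Ca}\|\nabla(\phi^{n+1}-\phi^n)\|^2$, and the dissipation $-\delta t\|\nabla\tilde{\bm u}^{n+1}\|^2$, $-\tfrac{\delta t\mathcal M}{Ca}\|\nabla\mu^{n+1}\|^2$. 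For the double well, a second-order Taylor expansion gives $(G'(\phi^n),\phi^{n+1}-\phi^n)=(G(\phi^{n+1})-G(\phi^n),1)-\tfrac12(G''(\zeta^n)(\phi^{n+1}-\phi^n),\phi^{n+1}-\phi^n)$, and the stabilization term $\tfrac{s}{\epsilon}\|\phi^{n+1}-\phi^n\|^2$ absorbs the remainder once $s\ge\tfrac12\sup|G''|$ on the range of $\phi$ — this is exactly the mechanism that makes the scheme unconditionally stable.

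\textbf{Pressure bookkeeping, concentration, and conclusion.} To pass from $\tilde{\bm u}^{n+1}$ to $\bm u^{n+1}$ I would rewrite \eqref{num: u} as $Re\,\bm u^{n+1}+\delta t\nabla(p^{n+1}-p^n)=Re\,\tilde{\bm u}^{n+1}$, take the $L^2$ square using $(\bm u^{n+1},\nabla q)=0$ to get $\tfrac{Re}{2}\|\tilde{\bm u}^{n+1}\|^2=\tfrac{Re}{2}\|\bm u^{n+1}\|^2+\tfrac{\delta t^2}{2Re}\|\nabla(p^{n+1}-p^n)\|^2$, and rewrite the explicit-pressure term of Step~1 as $\delta t(\nabla p^n,\tilde{\bm u}^{n+1})=\tfrac{\delta t^2}{2Re}(\|\nabla p^{n+1}\|^2-\|\nabla p^n\|^2-\|\nabla(p^{n+1}-p^n)\|^2)$; the $\|\nabla(p^{n+1}-p^n)\|^2$ pieces cancel, leaving the telescoping term $\tfrac{\delta t^2}{2Re}(\|\nabla p^{n+1}\|^2-\|\nabla p^n\|^2)$. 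For Step~3 I would test \eqref{num: c} with $\delta t\,\mu_c^{n+1}=\delta t\ln c^{n+1}$: the transport term vanishes because $\ln c^{n+1}\,\bm u^{n+1}\cdot\nabla c^{n+1}=\bm u^{n+1}\cdot\nabla(c^{n+1}\ln c^{n+1}-c^{n+1})$ integrates to zero by $\nabla\cdot\bm u^{n+1}=0$ and $\bm u^{n+1}|_{\partial\Omega}=0$; the flux term, via \eqref{num: flux} and the Neumann conditions, becomes $-\delta t\int_\Omega D^{n+1}c^{n+1}|\nabla\mu_c^{n+1}|^2\,\mathrm d\bm x\le 0$; and, using mass conservation $\int_\Omega(c^{n+1}-c^n)=0$ together with the convexity of $s\mapsto s\ln s$, one gets $(c^{n+1}-c^n,\ln c^{n+1})\ge(c^{n+1}\ln c^{n+1}-c^n\ln c^n,1)$. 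Summing the three steps and moving the non-negative artificial-dissipation and stabilization remainders to the right-hand side yields the asserted relation (keeping them explicit gives the exact identity; dropping them gives $\mathcal E_{total}^{n+1}\le\mathcal E_{total}^{n}$, i.e. unconditional stability). The genuine obstacles are the non-convex and entropic pieces: controlling $G(\phi^{n+1})-G(\phi^n)$ requires the stabilization condition on $s$ and, strictly, an a priori $L^\infty$ bound on $\phi$ (or a truncated potential), while handling $c\ln c$ by convexity produces only an inequality, so the "energy law" is exact only as a dissipation inequality unless one carries along the convexity gap and the quadratic remainders as explicit non-negative terms; ensuring $D^{n+1}>0$ and $c^{n+1}>0$ throughout (by a discrete maximum principle or as a standing hypothesis) is the remaining technical point.
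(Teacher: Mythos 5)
Your proposal is correct and follows the standard route that the paper itself leaves implicit (the paper gives no proof of this theorem; its continuous analogue is proved by exactly the testing strategy you use): pair \eqref{num: tu} with $\tilde{\bm u}^{n+1}$, \eqref{num: phi} with $\mu^{n+1}/Ca$, \eqref{num: mu_phi} with $-(\phi^{n+1}-\phi^{n})/Ca$, \eqref{num: c} with $\ln c^{n+1}$, cancel the transport and coupling terms by skew-symmetry and integration by parts, and absorb the splitting error of the projection step via the identity $\tfrac{Re}{2}\|\tilde{\bm u}^{n+1}\|^{2}=\tfrac{Re}{2}\|\bm u^{n+1}\|^{2}+\tfrac{\delta t^{2}}{2Re}\|\nabla(p^{n+1}-p^{n})\|^{2}$. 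All of these individual computations check out.

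The one substantive point you raise is also right, and worth emphasizing: the relation as stated in the theorem, with an equals sign and exactly the listed terms, cannot hold. Testing necessarily produces the additional non-negative contributions $\tfrac{Re}{2}\|\tilde{\bm u}^{n+1}-\bm u^{n}\|^{2}$, $\tfrac{\epsilon}{2Ca}\|\nabla(\phi^{n+1}-\phi^{n})\|^{2}$, the stabilization term $\tfrac{s}{\epsilon Ca}\|\phi^{n+1}-\phi^{n}\|^{2}$ against the Taylor remainder of $G$, and the convexity gap $(c^{n+1}-c^{n},\ln c^{n+1})-(c^{n+1}\ln c^{n+1}-c^{n}\ln c^{n},1)\ge 0$; so what one actually proves is the dissipation inequality $\mathcal E^{n+1}_{total}-\mathcal E^{n}_{total}\le -\delta t(\cdots)$, which is what ``unconditionally stable'' means here, under the usual hypotheses $s\ge\tfrac12\sup|G''|$ (requiring an a priori bound on $\phi$ or a truncated potential), $c^{n}>0$ propagating to $c^{n+1}>0$, and $D^{n+1}>0$. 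Your solvability discussion (Lax--Milgram for the linear coupled Step 1, Neumann Poisson problem for Step 2, linear advection--diffusion for Step 3 after noting $D c\nabla\ln c=D\nabla c$) is the expected argument. No gaps beyond the hypotheses you already record explicitly.
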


\begin{remark}
    Numerical scheme $\eqref{num: step1}$ is a coupled system for $\phi^{n+1}$ and 
    $\tilde{\bm{u}}^{n+1}$. Block Gauss iteration method is used to compute this system efficiently. 
\end{remark}

\section{Numerical results}\label{sec: results}
In this section,   some numerical experiments are conducted to illustrate  validity of our model.
We first check the sharp interface limit results in Section \ref{sec: sharp} by choosing smaller $\epsilon$ and comparing with analytical solutions of  sharp interface models. Then we check the convergence rate and energy stability of the numerical scheme in Section \ref{sec: scheme}. Finally, the calibrated model and scheme are used to study the effect of interface permeability. 

Block-centered finite difference method based on stagger mesh is adopted  to discretize equations 
$\eqref{eqn: dimensionaless system}$ in space. 
Variables $\phi$, $c$ and $p$ are located in the center of mesh, 
however velocity variables $u$ and $v$ are located on the center of edge. 
The main advantage of the block-centered finite difference method is 
it approximates the phase function, concentration function and pressure function with 
Neumann boundary condition to second-order accuracy, and also, it guarantees local mass conservation.

\subsection{Sharp interface limit test}
In this example, we take the steady state in 1D case to verify sharp interface limit of 
concentration function $c$. For simplicity, we first fixed the interface and only solve concentration equation \eqref{eqn: c} where interface is assumed at $x_{0}=0.5$ in domain $(0,1)$.
The diffusion coefficient is taken as $D_{1}=D_{2}=1$ and 
Dirichlet boundary condition $c_{0}=1,~c_{1}=4$ is used. 
Then in this case,  the exact solution of sharp interface model \eqref{eqn: diffusion}-\eqref{eqn: flux law}  is 
\begin{equation}
    c=\left\{
    \begin{aligned}
        &x+1, && x<x_{0}, \\
        &x+3, && x\geq x_{0}.
    \end{aligned}
    \right.
\end{equation}

In Figure $\ref{fig: sharp interface}$, the exact solution of is shown in black solid line and the dash lines are the solutions of Eq. \eqref{eqn: c} where the phase field function is chosen as 
$\phi=\tanh(\frac{x-x_{0}}{\sqrt{2}\epsilon})$ with different interface thickness $\epsilon$. In the bulk region, solutions of two methods fit very well. 
 As $\epsilon\rightarrow 0$, the proposed diffusive model solutions change  much sharper near the interface and convergence to the sharp interface solution, which is consistent with our analysis in Section \ref{sec: convergence}.

\begin{figure}[htbp]
    \begin{center}
    \includegraphics[width=0.8\textwidth]{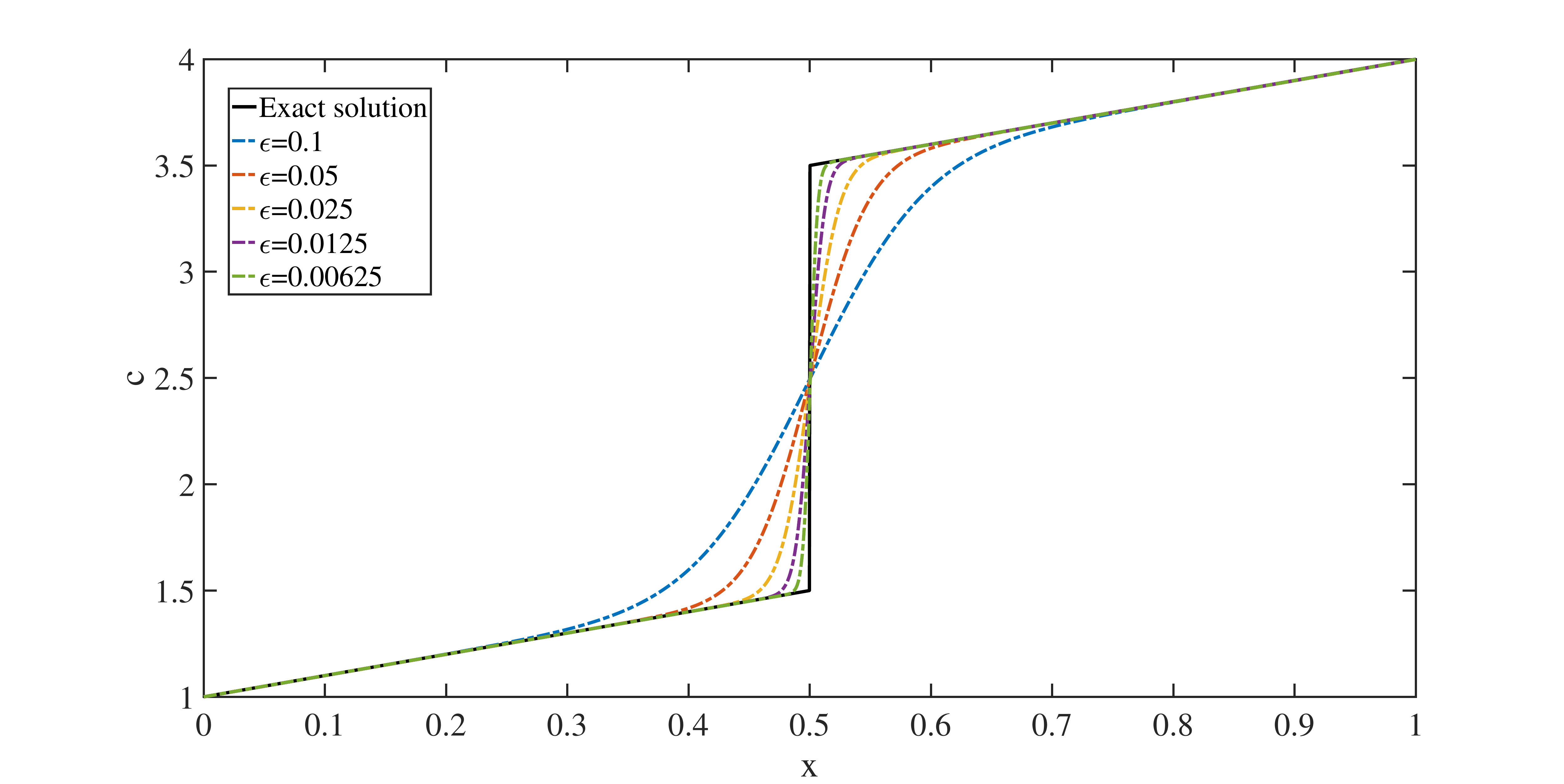}
    \end{center}
    \caption{Sharp interface limit}
    \label{fig: sharp interface}
\end{figure}

\subsection{Comparison with sharp interface model}
In this subsection, we conduct two numerical tests to compare with  the results using immersed boundary methods \cite{Huang2009Immersed}. 
The first experiment is 1D steady state solution of Eqs. \eqref{eqn: diffusion}-\eqref{eqn: flux law}
  with $Q(c)=c$, $K =1/5$.  The   locations of interfaces are chosen  at $x_{1}=7/18$ and $x_{2}=11/18$, i.e. $\Omega^+ = (x_1, x_2)$ and $\Omega^- = (0,x_1)\cup (x_2, 1)$ and diffusion constants are  $D^+ = D^-=1$. In this case, the exact solution can be obtained as 
\begin{equation}
    c=\left\{
    \begin{aligned}
        &-\frac{1}{11}x+2, && x<x_{1}, \\
        &-\frac{1}{11}x+\frac{17}{11}, && x_{1}\leq x<x_{2},\\
        &-\frac{1}{11}(x-1)+1, && x\geq x_{2}.
    \end{aligned}
    \right.
\end{equation}

In Figure $\ref{fig: Huang_1d}$, the solid black line is above exact solution, blue line with circle is the immerse boundary method solution and the red line with square is the the diffusive interface solution with interface width $\epsilon=0.01$.
  
We can see that the numerical solution is almost coincided with the exact solution and 
there is  indistinguishable difference between our results and the immerse boundary result in \cite{Huang2009Immersed}. 

\begin{figure}[h!]
        \begin{center}
            \includegraphics[width=0.8\textwidth]{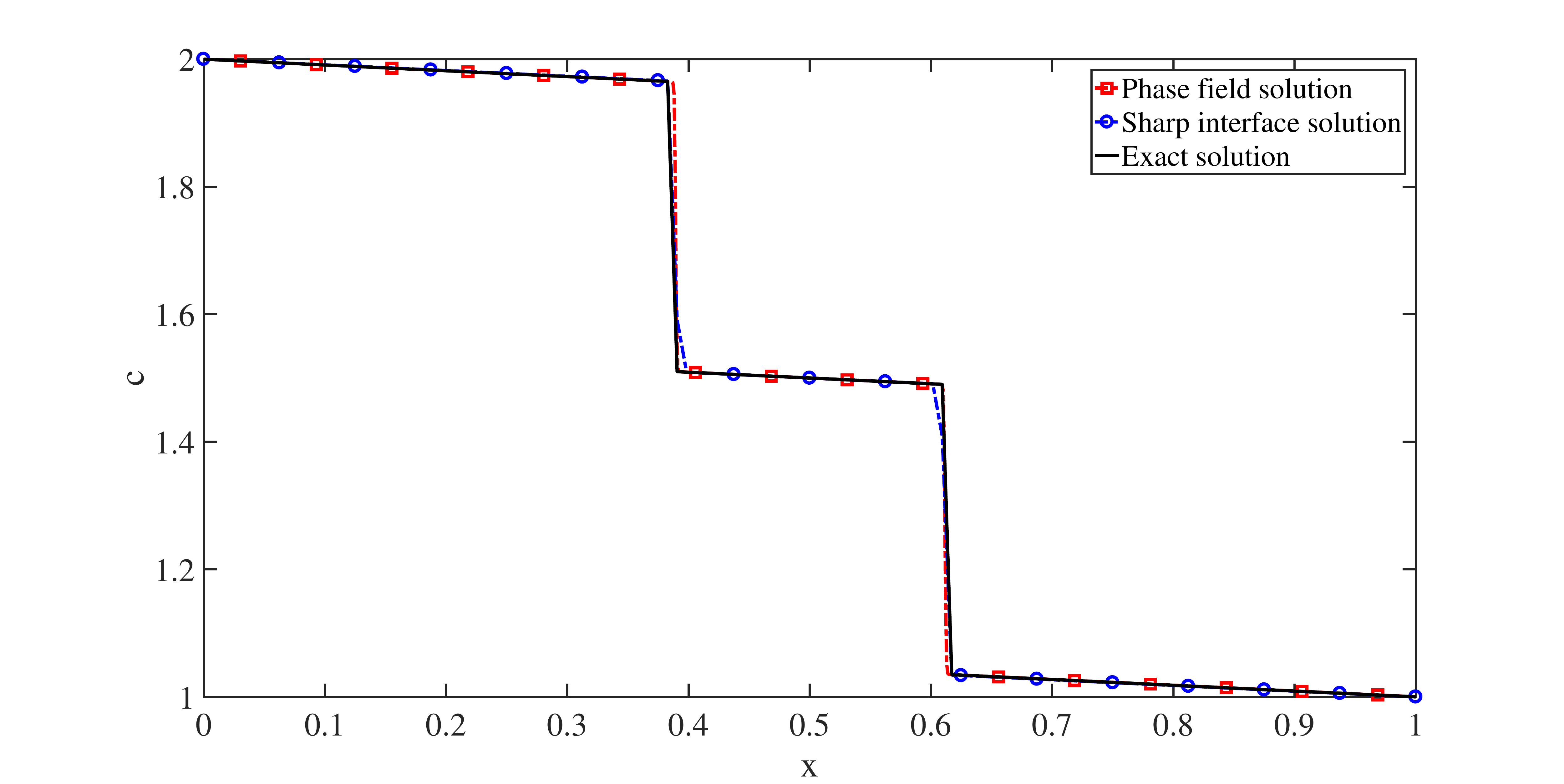}
        \end{center}
    \caption{Steady state solution with the linear law on the interfaces located at $x_{1}=7/18$ 
    and $x_{2}=11/18$ with $K=0.2$, $D=1$, $c_{0}=2$ and $c_{1}=1$. The black solid line is the exact solution, 
    and the red dash with square is the numerical solution by phase field method, 
    but the blue dash with circle is the numerical solution by immersed boundary method.} 
    \label{fig: Huang_1d}
\end{figure}

In the next, the 2D  validation is conducted where the interface is fixed as a circle with radius $r=11/18$  and center at $\bm{x}_0=(0.5, 0.5)$. 
The  initial  condition of concentration (see Fig. \ref{fig: Huang} (a)) and parameters 
are set to be same as those in \cite{Huang2009Immersed}, 
 $c(t_{0},\bm{x})=G(t_{0},\bm{x}-\bm{x}_{0})$, where
\begin{equation}
    G(t,\bm{x}-\bm{x}_{0})=\frac{1}{4\pi Dt}\exp{(-\frac{\bm{x}-\bm{x}_{0}}{4Dt})}.
\end{equation}
with 
\begin{equation}
    t_{0}=10^{-4},\quad 
    D = 1,\quad
    K = 10.
\end{equation}
The computational domain is discretized by a uniform grid with size $1/128$ and interface thickness $\epsilon = 0.01$.  

Fig.\ref{fig: Huang} (b) show the distribution of concentrations  by using immerse boundary method (dash lines) \cite{Huang2009Immersed} and proposed diffusive interface method (solid lines) at time $t = t_0+10^{-2}$.
 It illustrates that our diffusive interface model fits the immerse interface results very well for the concentration through the semi-permeable interface with restrict diffusion.


\begin{figure}[h!]
    \subfigure[Initial condition of concentration.]
    {
    \begin{minipage}[h]{0.5\linewidth}
        \begin{center}
            \includegraphics[width=0.8\textwidth]{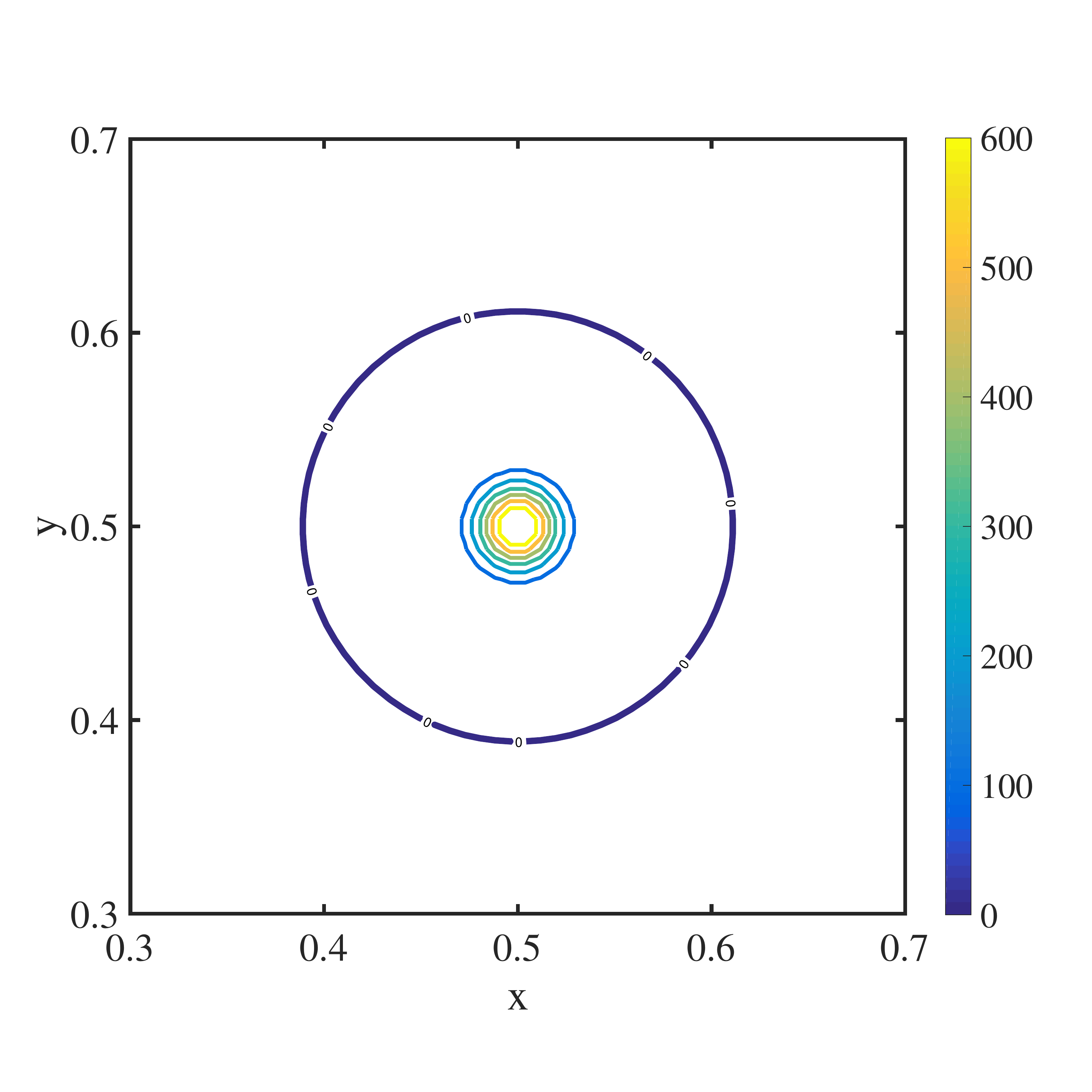}
        \end{center}
    \end{minipage} 
    }
    \subfigure[Snapshot of level curves of concentration at $t=t_{0}+10^{-2}$.]
    {
    \begin{minipage}[h]{0.5\linewidth}
        \begin{center}
            \includegraphics[width=0.8\textwidth]{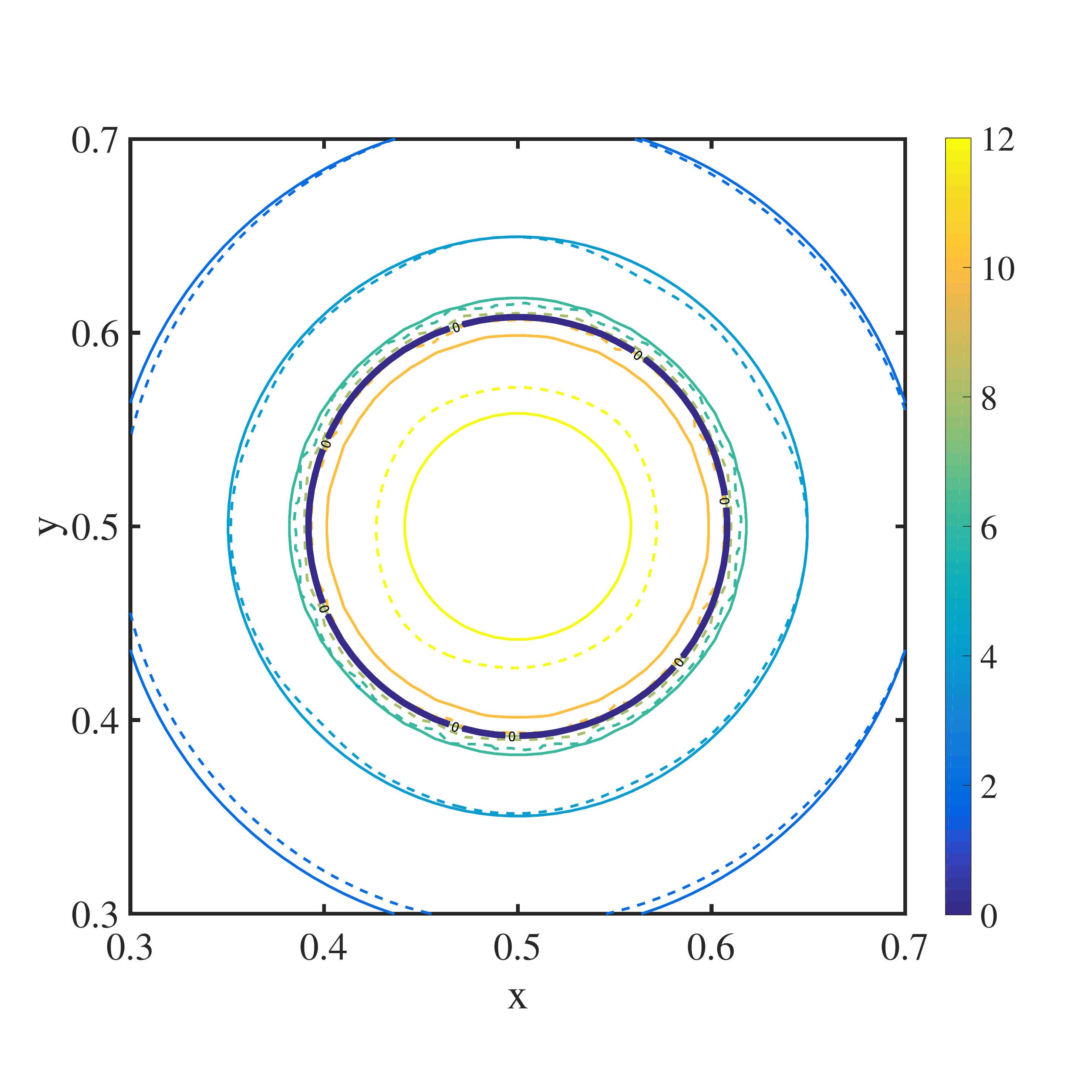}
        \end{center}
    \end{minipage} 
    }
    \caption{Comparison between phase field method and immersed boundary method for mass transfer. The fixed interface is expressed with bold cure marked by 0. 
   (a): The initial condition of concentration that concentrates in the centre. (b): Snapshot of level curves of concentration $t=t_{0}+10^{-2}$ by using different methods.
 The dashed lines are  level curves obtained by immersed boundary method \cite{Huang2009Immersed}  and the solid lines are the level curves obtained by  proposed diffusive interface method. } 
    \label{fig: Huang}
\end{figure}

\subsection{Convergence study and unconditional energy stability}
In this section, we conduct the convergence test and unconditional energy stable test to 
illustrate the effectiveness of our proposed numerical scheme. 
Here we use the uniform mesh, namely we use the mesh size $N_{x}=N_{y}=N$, and if not specified, 
uniform mesh is always tenable. 
\subsubsection{Convergence rate test}\label{sec: convergence}
In this subsection, we perform some numerical experiments to support the theoretical results. 
We use a uniform Cartesian grid to discretize a square domain $\Omega=(0,1)^{2}\subset \mathbb{R}^{2}$. 
     The initial condition is chosen as follows,  (\textcolor{red}{ see Fig. \ref{fig: initial convergence}})
\begin{subequations}\label{eqn: convergence}
    \begin{align}
        &\phi(\bm{x},0)=0.2+0.5\cos(2\pi x)\cos(2\pi y),\\
        &c(\bm{x},0)=0.6+0.2\cos(2\pi x)\cos(2\pi y),\\
        &u(\bm{x},0)=-0.25\sin^{2}(\pi x)\cos(2\pi y),\\
        &v(\bm{x},0)=0.25\sin^{2}(\pi y)\cos(2\pi x).
    \end{align}
\end{subequations}
Periodic boundary conditions are adopted for all variables. 
The parameters in model $\eqref{eqn: dimensionaless system}$ are set as 
\begin{equation}
    \epsilon=0.08,\quad Re = 1,\quad Ca = 1,\quad Pe = 1,\quad D^{+}=D^{-}=1,\quad 
    K=A^{-1},\quad Q(c) = c.
\end{equation}

\begin{figure}[h!]
\begin{center}
    \subfigure[$c(\bm{x},0)$.]{
    \begin{minipage}[h]{0.4\linewidth}
        \begin{center}
            \includegraphics[width=1.0\textwidth]{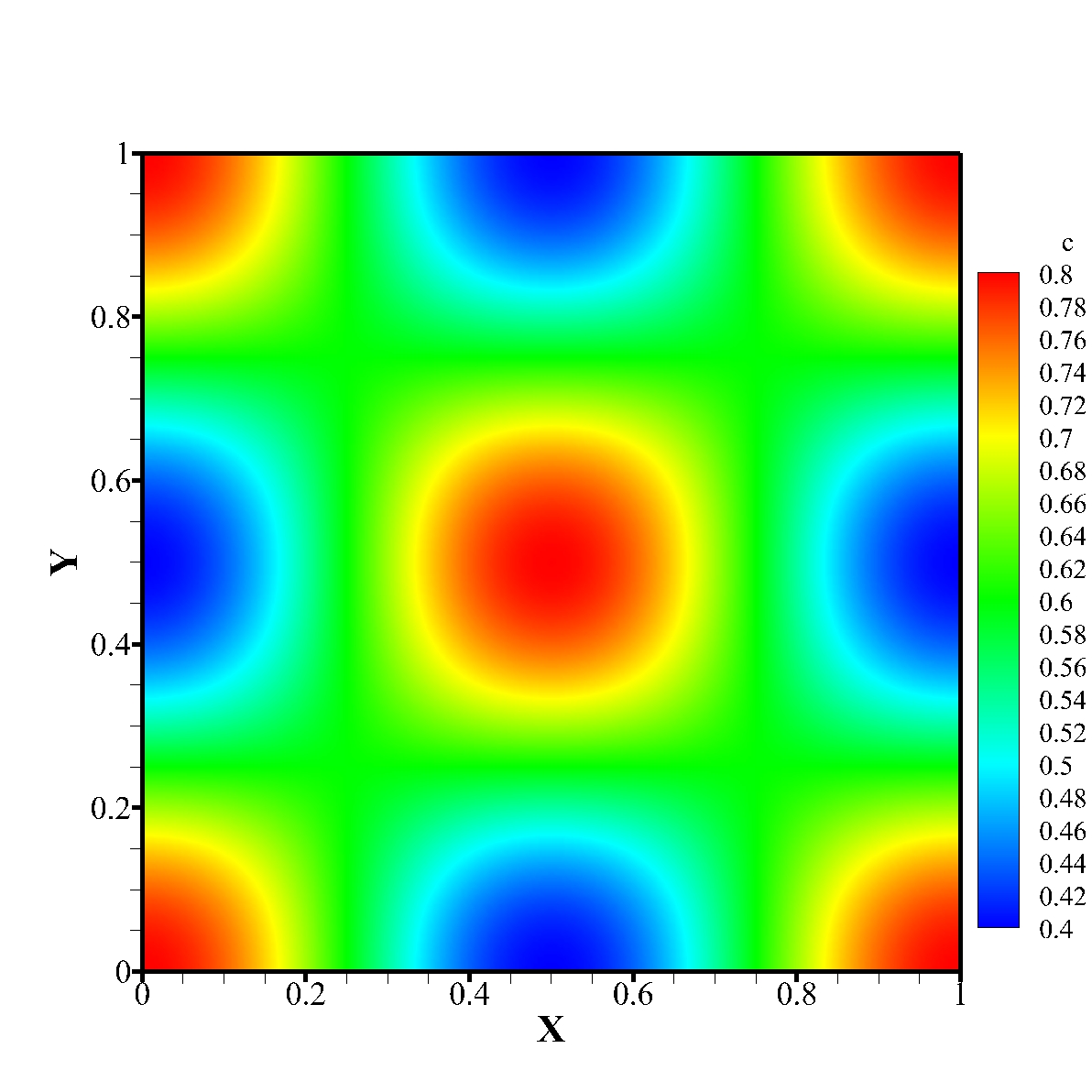}
        \end{center}
    \end{minipage}
    }
    \subfigure[$\phi(\bm{x},0)$.]{
    \begin{minipage}[h]{0.4\linewidth}
        \begin{center}
            \includegraphics[width=1.0\textwidth]{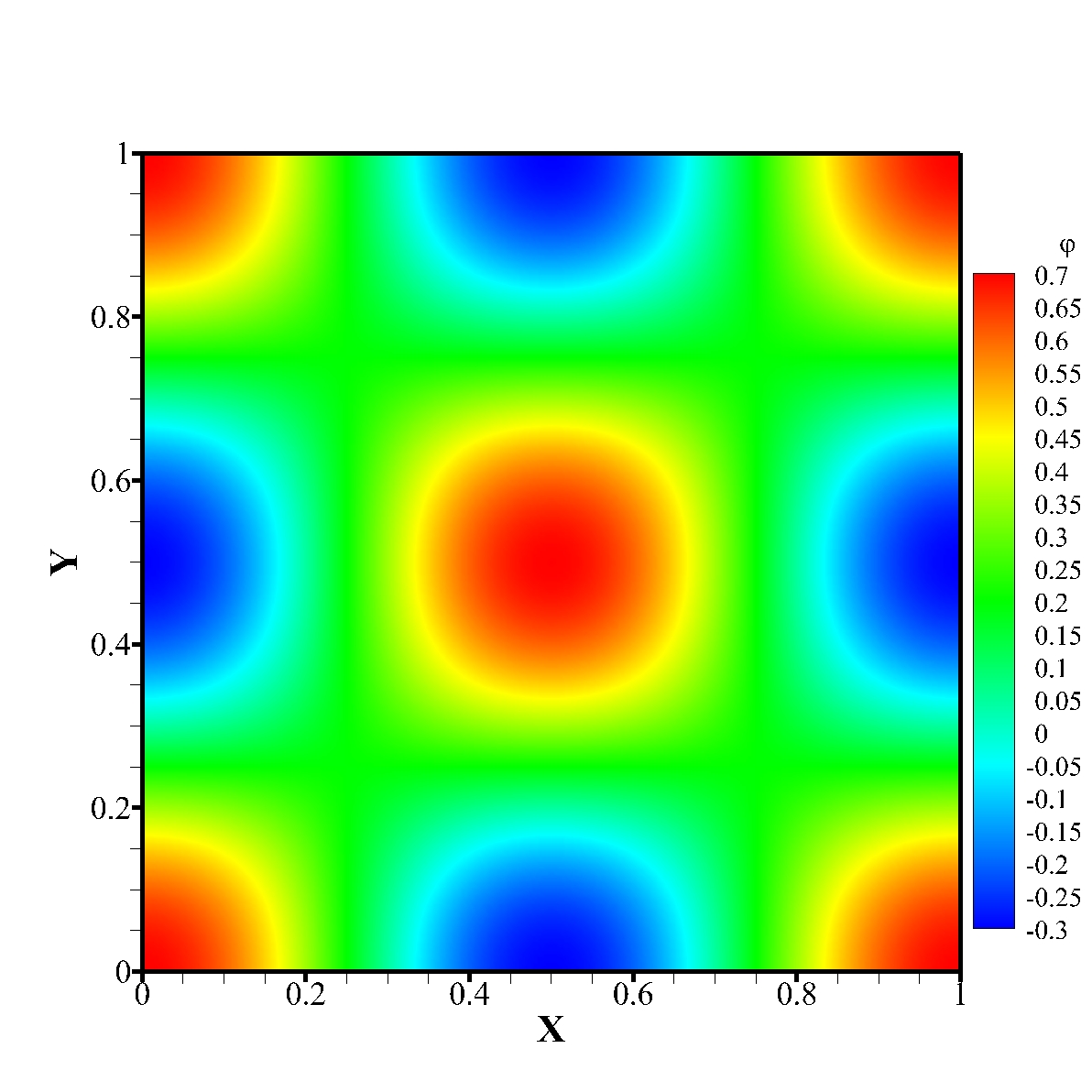}
        \end{center}
    \end{minipage} 
    }
    \end{center}
    \caption{Initial conditions for convergence study. (a): concentration $c(x,0)$; (b): phase field function $\phi(x,0)$.   $\ref{eqn: convergence}$.} 
    \label{fig: initial convergence}
\end{figure}

The Cauchy error \cite{Wise2007Solving} is used to test the convergence rate. 
In this test method, error between two different spacial mesh sizes  $h$ and $h/2$ is calculated by 
$\|e_{\zeta}\|=\|\zeta_{h}-\zeta_{h/2}\|$, where $\zeta$ is the function to be solved. The mesh sizes are set to  be $h=1/16,~1/32, ~1/64, ~ 1/128, ~1/256$ 
 and time step is fixed as $\delta t=10^{-4}$.
The $L^{2}$ and $L^{\infty}$ numerical errors and convergence rate at chosen time $T= 0.1$ are displayed in Table 
$\ref{tab: L2 space convergence}$ and Table $\ref{tab: Linfty space convergence}$, respectively. 
The second order spatial accuracy is apparently observed for all the variables. 

\begin{table}[h!]
    \centering
    \caption{The discrete $L^{2}$ error and convergence rate at $t = 0.1$ 
    with initial data $\eqref{eqn: convergence}$ and the given parameters.}\label{tab: L2 space convergence}
    \vskip 0.2cm
    \begin{tabular}{lcccccccccc}
    \toprule
    Grid sizes    &Error$(\phi)$ & Rate & Error$(c)$ &Rate&Error$(u)$ & Rate & Error$(v)$ &Rate& Error$(p)$ &Rate\\
    \midrule
    $16\times 16$    & 4.01e-02 &  --  & 1.01e-02 &  --  & 1.15e-04 &  --  & 1.15e-04 &  --  & 1.57e-03 &  --  \\
    $32\times 32$    & 8.90e-03 & 2.17 & 4.91e-03 & 1.90 & 3.23e-05 & 1.83 & 3.23e-05 & 1.83 & 1.24e-04 & 3.66 \\
    $64\times 64$    & 2.22e-03 & 2.01 & 6.80e-04 & 1.99 & 8.06e-06 & 2.00 & 8.06e-06 & 2.00 & 3.00e-05 & 2.05 \\
    $128\times 128$  & 5.54e-04 & 2.00 & 1.70e-04 & 2.00 & 2.01e-06 & 2.00 & 2.01e-06 & 2.00 & 7.46e-06 & 2.01 \\
    $256\times 256$  & 1.38e-04 & 2.00 & 4.26e-05 & 2.00 & 5.04e-07 & 2.00 & 5.04e-07 & 2.00 & 1.86e-06 & 2.00 \\
    \bottomrule
    \end{tabular}
\end{table}

\begin{table}[h!]
    \centering
    \caption{The discrete $L^{\infty}$ error and convergence rate at $t = 0.1$ 
    with initial data $\eqref{eqn: convergence}$ and the given parameters.}\label{tab: Linfty space convergence}
    \vskip 0.2cm
    \begin{tabular}{lcccccccccc}
    \toprule
    Grid sizes    &Error$(\phi)$ & Rate & Error$(c)$ &Rate&Error$(u)$ & Rate & Error$(v)$ &Rate& Error$(p)$ &Rate\\
    \midrule
    $16\times 16$    & 7.64e-02 &  --  & 1.89e-02 &  --  & 1.83e-04 &  --  & 1.83e-04 &  --  & 3.99e-03 &  --  \\
    $32\times 32$    & 1.69e-02 & 2.17 & 4.91e-03 & 1.95 & 5.00e-05 & 1.88 & 5.00e-05 & 1.88 & 2.53e-04 & 3.98 \\
    $64\times 64$    & 4.18e-03 & 2.02 & 1.23e-03 & 1.99 & 1.25e-05 & 2.00 & 1.25e-05 & 2.00 & 5.89e-05 & 2.10 \\
    $128\times 128$  & 1.03e-03 & 2.02 & 3.07e-04 & 2.01 & 3.12e-06 & 2.00 & 3.12e-06 & 2.00 & 1.50e-05 & 1.98 \\
    $256\times 256$  & 2.55e-04 & 2.01 & 7.61e-05 & 2.01 & 7.80e-07 & 2.00 & 7.80e-07 & 2.00 & 3.71e-06 & 2.01 \\
    \bottomrule
    \end{tabular}
\end{table}

\subsubsection{Unconditionally energy stable test}
In this subsection, we carry out a numerical experiment to survey the unconditionally energy 
stability about our numerical scheme with the same initial condition and boundary conditions. 
The result is listed in Figure $\ref{fig: stability}$. 
Time steps are chosen from $\delta t=0.1$ to $\delta t=0.1\times \frac 1 {2^{8}}$  with fixed spacial mesh size $h = 1/128$. 

The total energy dynamics over time with different time steps are shown in Fig. \ref{fig: stability}. It illustrates that  the energy is monotonic decrease with different time steps which confirms the unconditional energy stability of the  proposed scheme. 
\begin{figure}[h!]
    \begin{center}
    \includegraphics[width=0.8\textwidth]{ 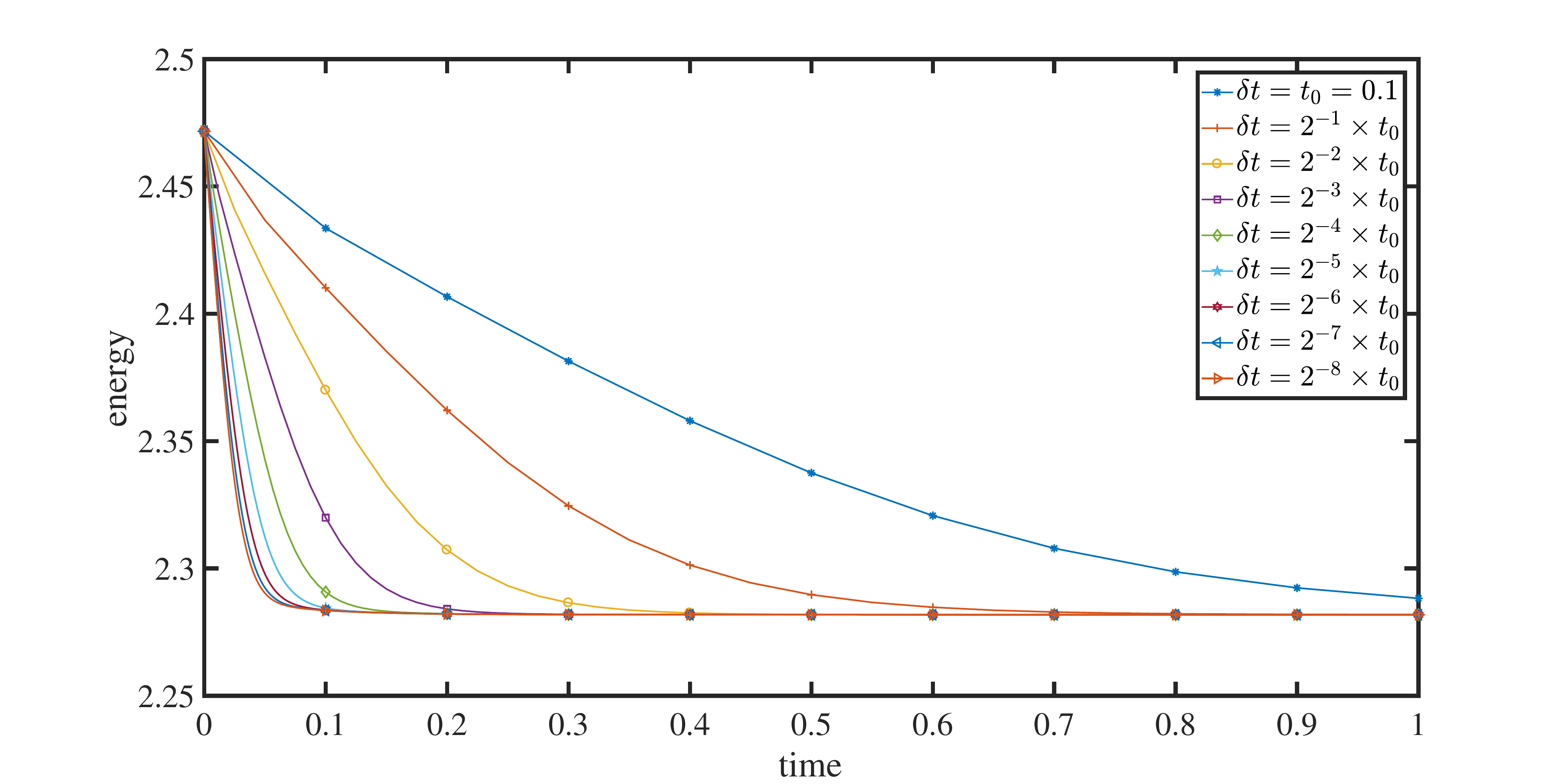}
    \end{center}
    \caption{Unconditional energy stability with different time steps represented by 
    different markers and colors.}
    \label{fig: stability}
\end{figure}

We utilize the result computed with time step $\delta t=0.1\times \frac {1} {2^{8}}$ to give the conservation of volume about our numerical scheme for $\phi$ as illustrated in Fig. $\ref{fig: conservation}$. 
It demonstrates the volume of $\phi$ doesn't change over time. 
\begin{figure}[h!]
    \begin{center}
    \includegraphics[width=0.8\textwidth]{ 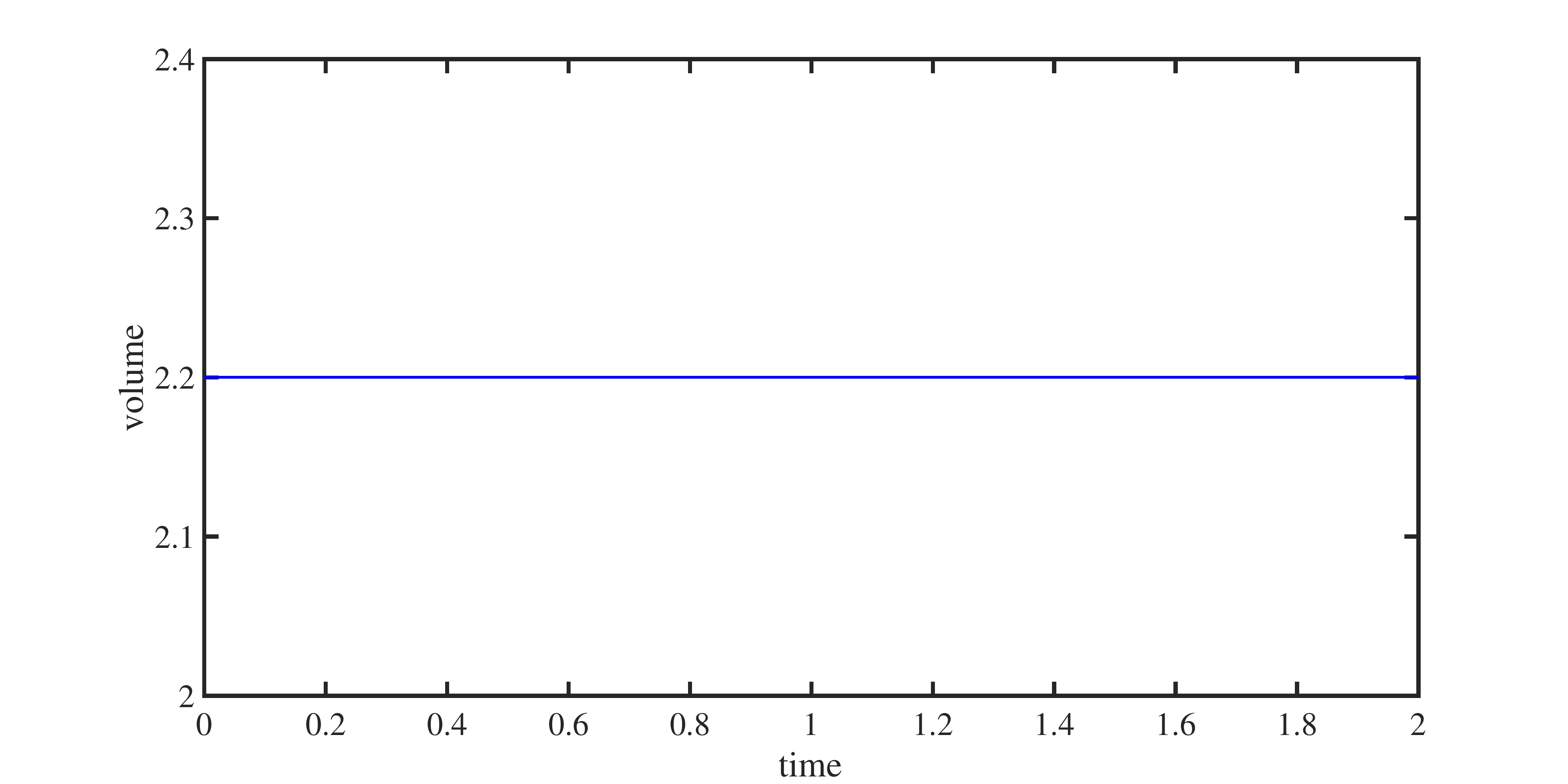}
    \end{center}
    \caption{Conservation of volume of the numerical scheme with time step $\delta t=0.1\times 2^{-8}$.}
    \label{fig: conservation}
\end{figure}

\subsection{Effect of interface permeability}
In this subsection, the calibrated model and scheme are used to study the effect of interface permeability on the diffusion of substance concentration.

We  first consider   a single drop with high ($K = \frac {1}{2\sigma \delta}$), medium  ($K = \frac {1}{2\sigma}  $) and low (($K = \frac {\delta}{2\sigma }$) permeability with $\delta = 0.02$, is suspended in a shear flow.  
The initial  profiles of concentration and interface are given as follows (see Fig. \ref{fig: initial shear}), 
\begin{subequations}\label{exm: comparison}
    \begin{align}
            &\phi({\bm{x},t})=\tanh{\frac{0.25-\sqrt{(x-0.5)^{2}+(y-0.5)^{2}}}{\sqrt{2}\epsilon}},
        \\
        &c(\bm{x},0)=0.6y+0.2. 
    \end{align}
\end{subequations}
with boundary condition 
\begin{align}
  &\nabla\phi\cdot\bm{n}|_{y=0}
  =\nabla\phi\cdot\bm{n}|_{y=1}=0,\quad
  \phi(0,y,t)=\phi(1,y,t),
  \nonumber\\
  &\nabla\mu\cdot\bm{n}|_{y=0}
  =\nabla\mu\cdot\bm{n}|_{y=1}=0,\quad
  \mu(0,y,t)=\mu(1,y,t),
  \nonumber\\
  &\nabla c\cdot\bm{n}|_{y=0}
  =\nabla c\cdot\bm{n}|_{y=1}=0,\quad
  c(0,y,t)=c(1,y,t),
  \nonumber\\
    &
    \bm{u}|_{y=0}=(-1,0)^T,\quad\bm{u}|_{y=1}=(1,0)^T,
    \nonumber
\end{align}
and periodic boundary conditions are used on the left and right boundaries. 
The parameters read  as 
\begin{equation}
Re=100,\quad
Ca=1,\quad
Pe=1,\quad
D^{+}=1,\quad
D^{-}=1,\quad
\epsilon = 0.02,\quad
\mathcal{M}=0.02.
\end{equation}

\begin{figure}[htbp]
    \subfigure[Initial condition of $c$.]{
    \begin{minipage}[h]{0.45\linewidth}
        \begin{center}
            \includegraphics[width=1.0\textwidth]{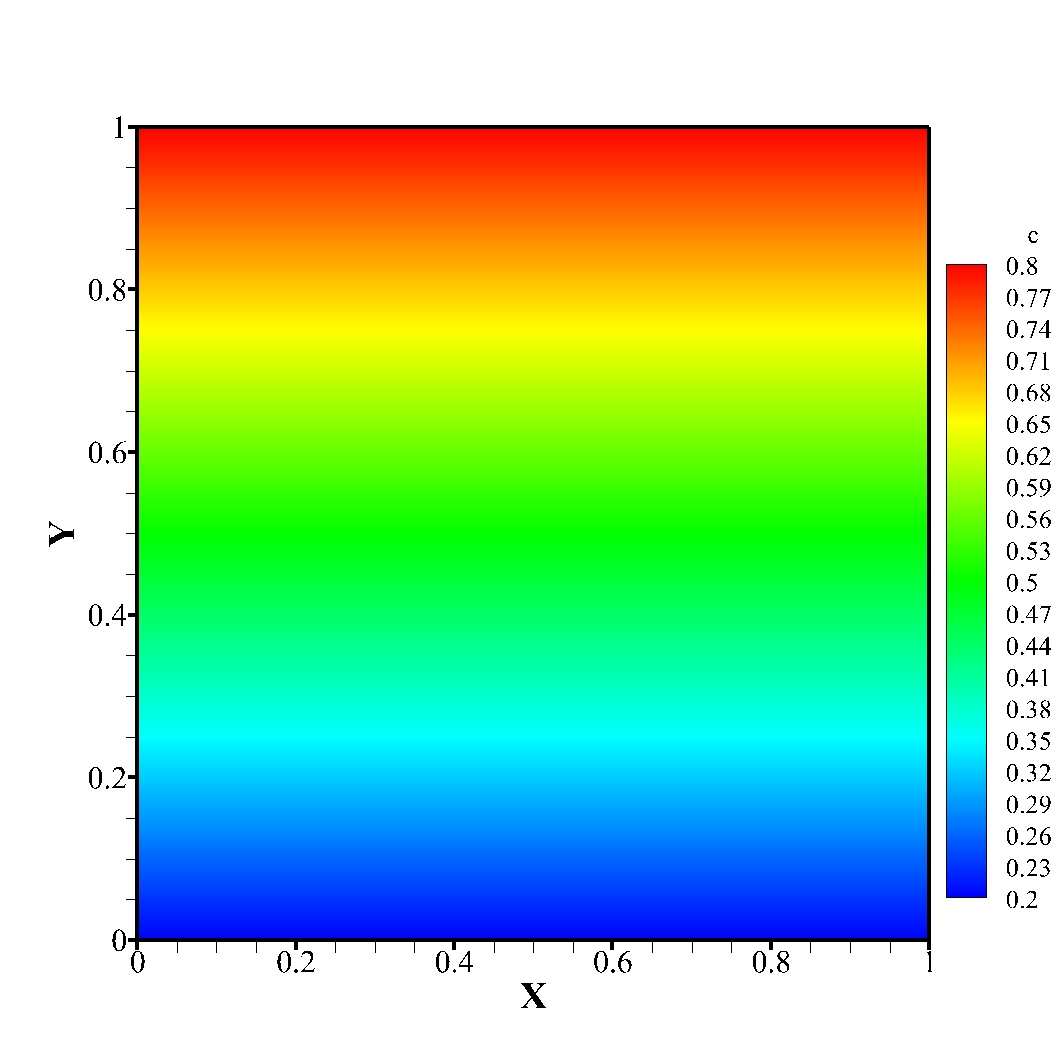}
        \end{center}
    \end{minipage}
    }
    \subfigure[Initial condition of $\phi$.]{
    \begin{minipage}[h]{0.45\linewidth}
        \begin{center}
            \includegraphics[width=1.0\textwidth]{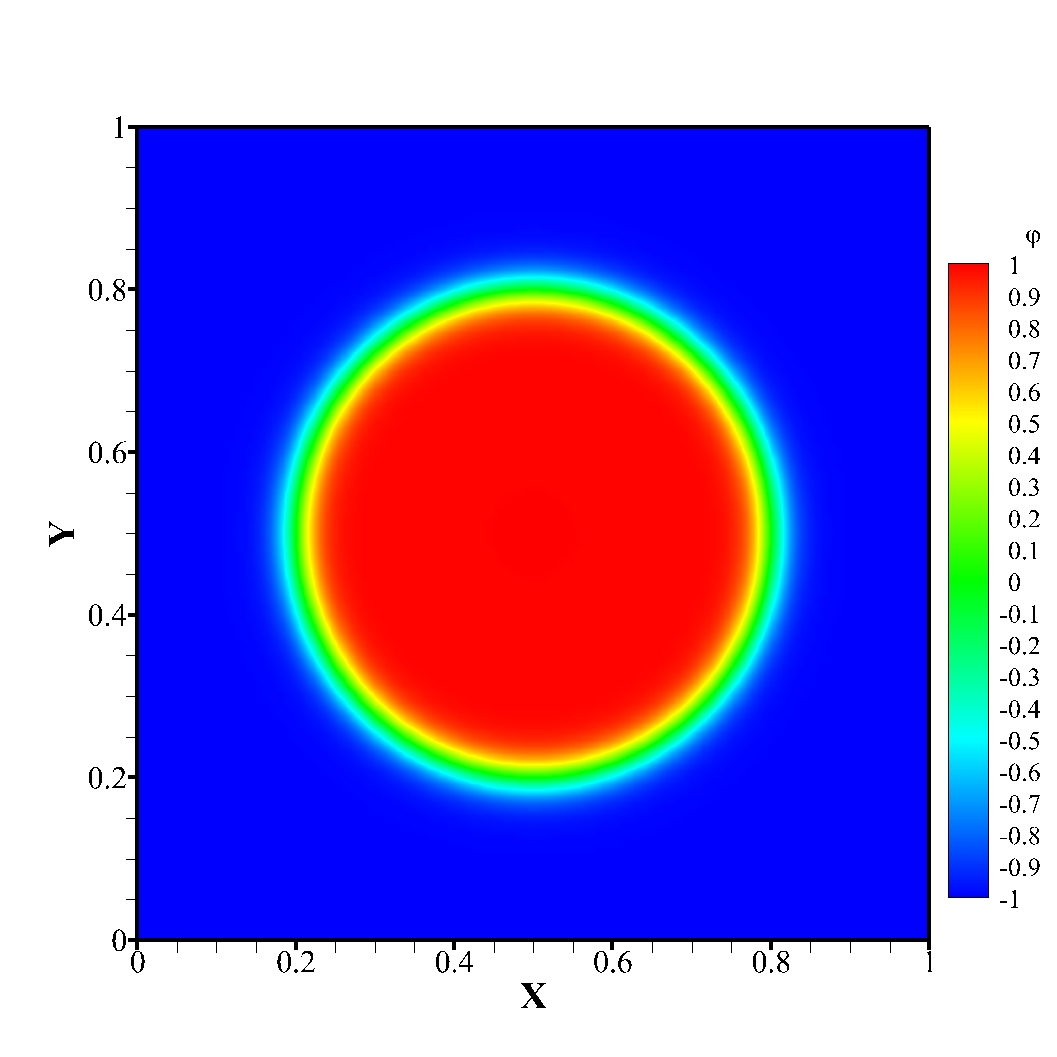}
        \end{center}
    \end{minipage} 
    }
    \caption{Initial condition for example $\ref{exm: comparison}$.
    We set the concentration is a linear function with y-direction.
    The phase field function is a circle with a diffuse interface. 
    The velocity is a shear flow.} 
    \label{fig: initial shear}
\end{figure}

The 2D and 3D profiles of concentration at steady states are shown in Figs. \ref{fig: 2d steady state}-\ref{fig: 3d steady state}.  When the interface with large permeability,  the distribution of concentration is close to the linear solution where no interface is presented. This could also be observed by the direction of flux in Fig.  \ref{fig: flux fielddiagram} (a) and magnitude of flux in Fig. \ref{fig: flux magiagram}(a).  
When the permeability decreases, the flux across the interface decreases (see Fig. \ref{fig: flux magiagram}(b-c)), and are close to nonflux boundary condition such that the straight flux is disturbed around the boundary as shown in Fig. \ref{fig: flux fielddiagram}(b-c) and trans-membrane flux is restricted. 
At the steady state, due to limited flux on the interface, the inner region $\Omega^+$ is almost a constant (see Figs. \ref{fig: 2d steady state}-\ref{fig: 3d steady state} (c)) due to the diffusion inside and very limit flux , while the profile of concentration in the out region $\Omega^-$  is close to the diffusion in a perforated domain. 

\begin{figure}[htbp]
    \subfigure[ Higher permeability 
     ]{
    \begin{minipage}[h]{0.33\linewidth}
        \begin{center}
            \includegraphics[width=1.0\textwidth]{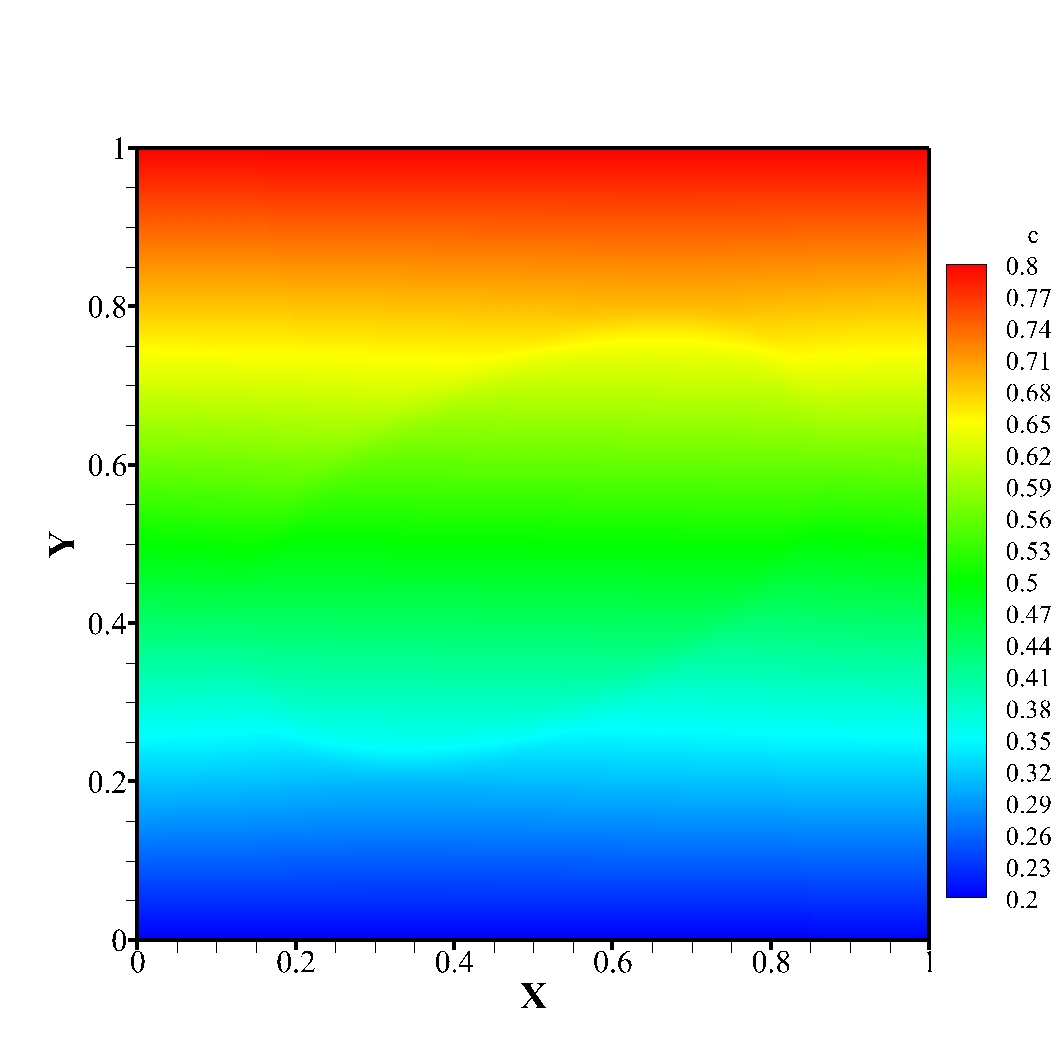} 
        \end{center}
    \end{minipage}
    }
    \subfigure[Medium permeability 
     ]{
    \begin{minipage}[h]{0.33\linewidth}
        \begin{center}
            \includegraphics[width=1.0\textwidth]{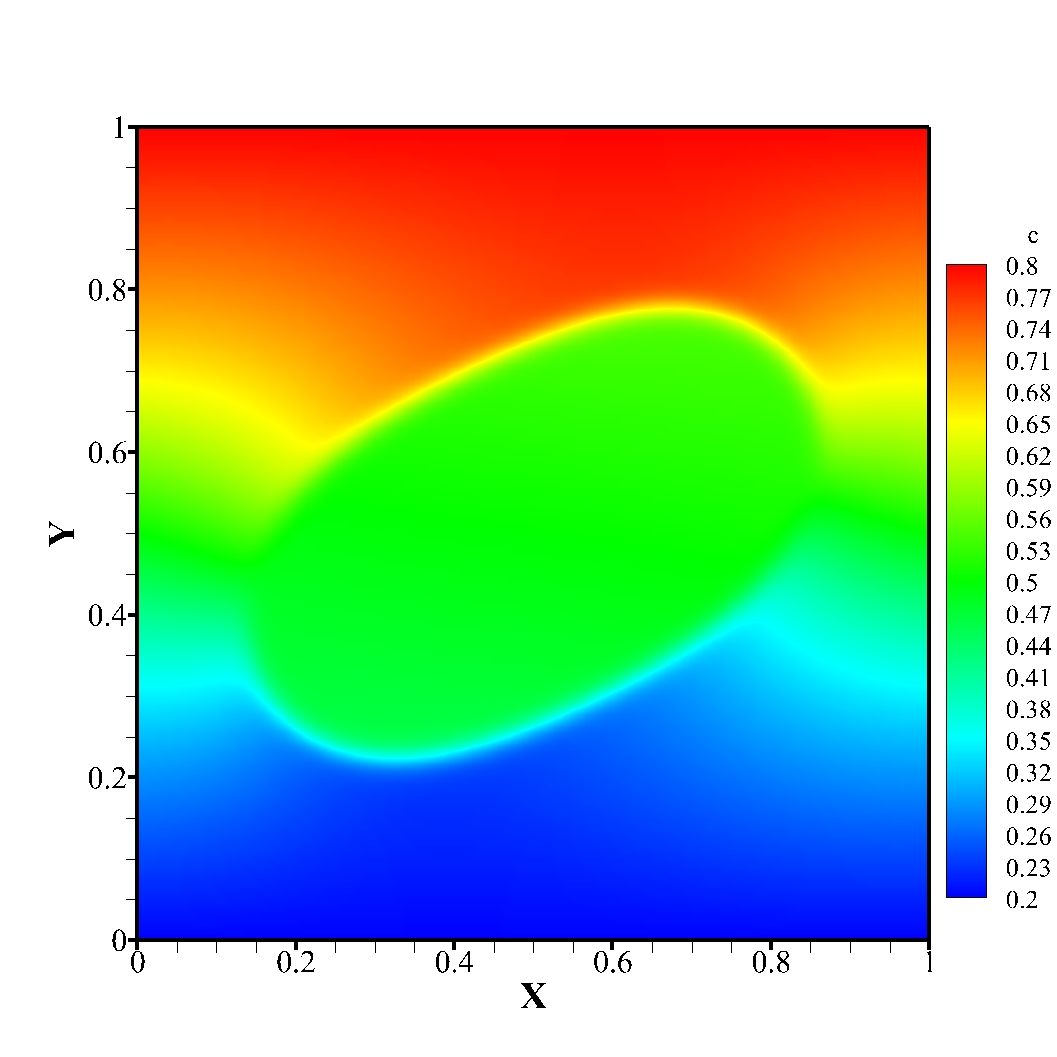} 
        \end{center}
    \end{minipage}
    }
    \subfigure[Lower permeability 
     ]{
    \begin{minipage}[h]{0.33\linewidth}
        \begin{center}
            \includegraphics[width=1.0\textwidth]{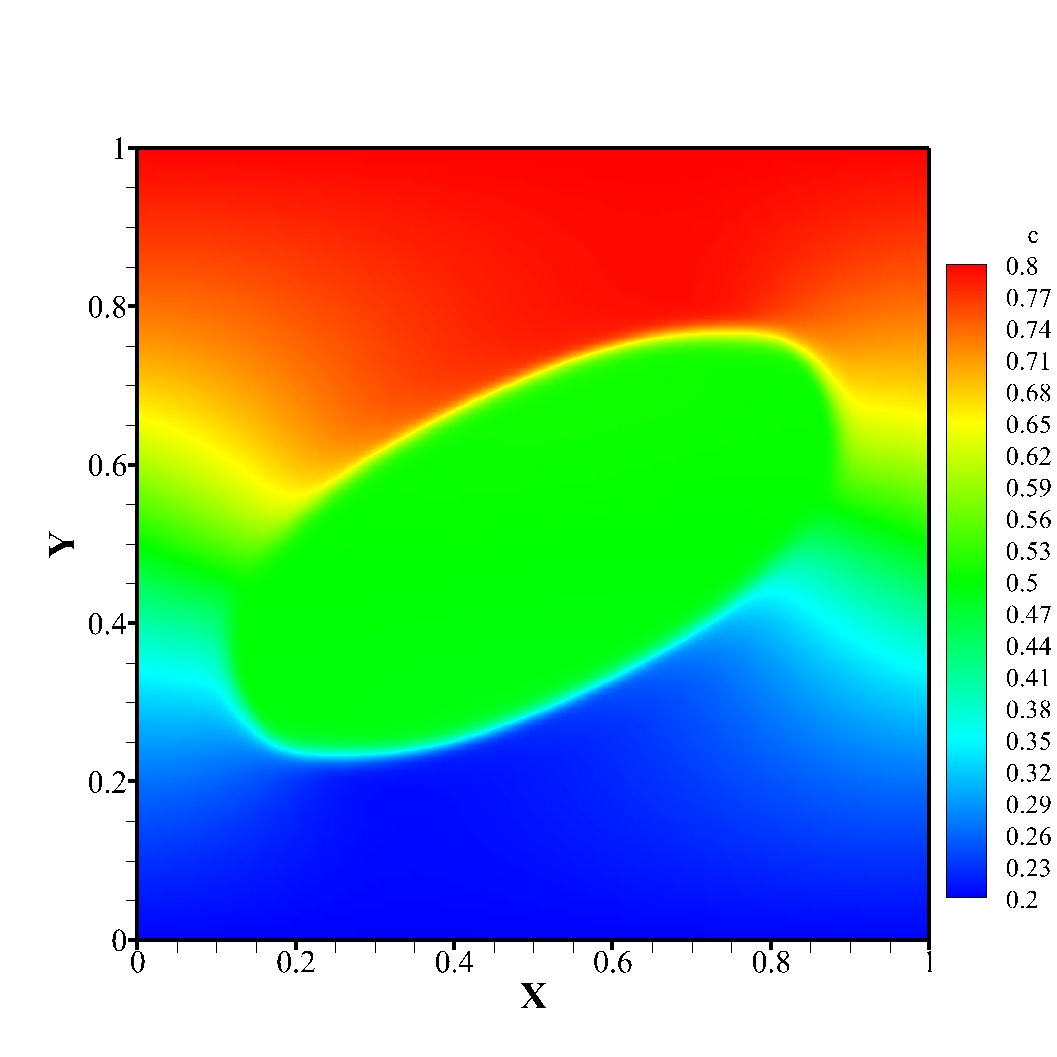} 
        \end{center}
    \end{minipage} 
    }
    \caption{Snapshots of $c$ at steady state with different permeability:  $K=\frac{1}{2\sigma\delta}, \frac{1}{2\sigma}$ and $\frac{\delta}{2\sigma}$.}
    \label{fig: 2d steady state}
\end{figure}

\begin{figure}[htbp]
    \subfigure[Higher permeability 
     ]{
    \begin{minipage}[h]{0.33\linewidth}
        \begin{center}
            \includegraphics[width=1.0\textwidth]{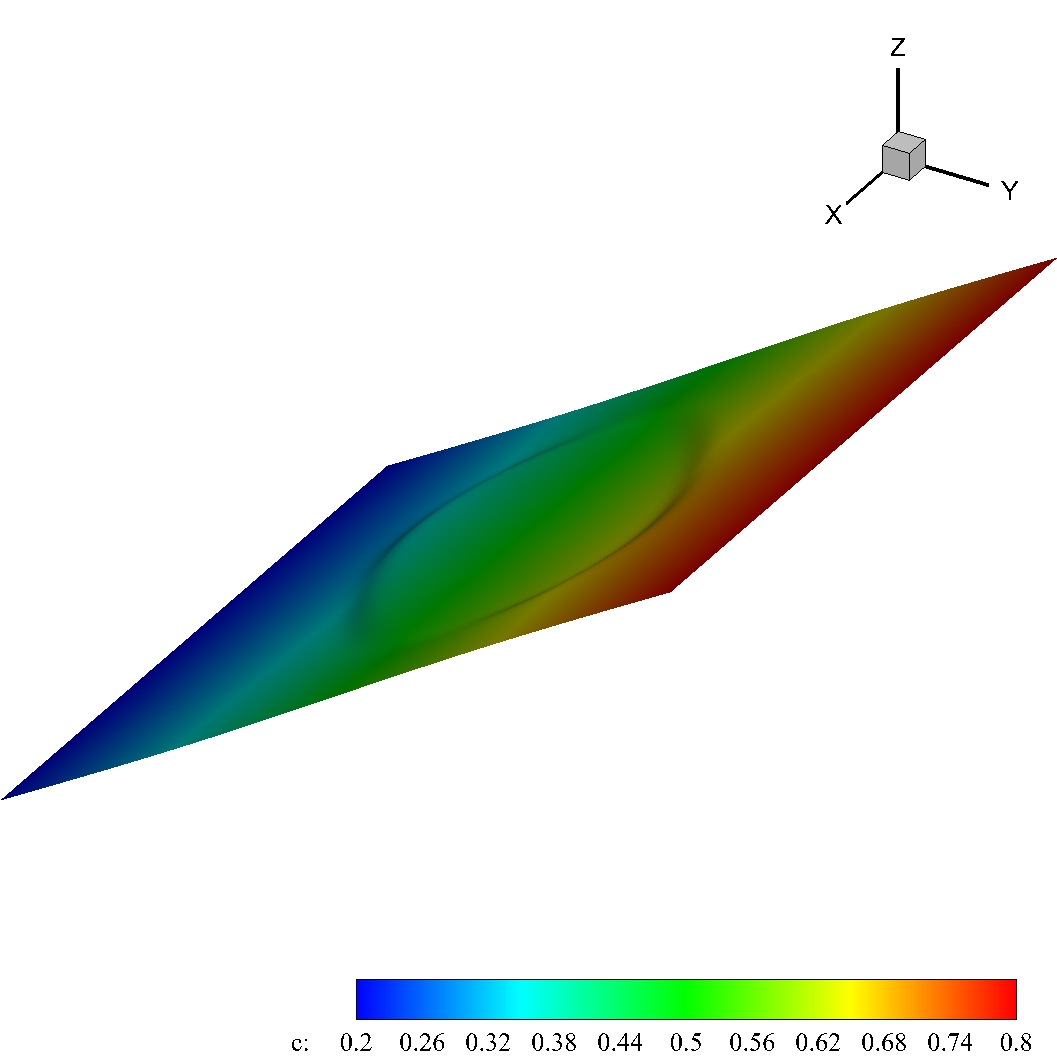} 
        \end{center}
    \end{minipage}
    }
    \subfigure[Medium permeability 
    ]{
    \begin{minipage}[h]{0.33\linewidth}
        \begin{center}
            \includegraphics[width=1.0\textwidth]{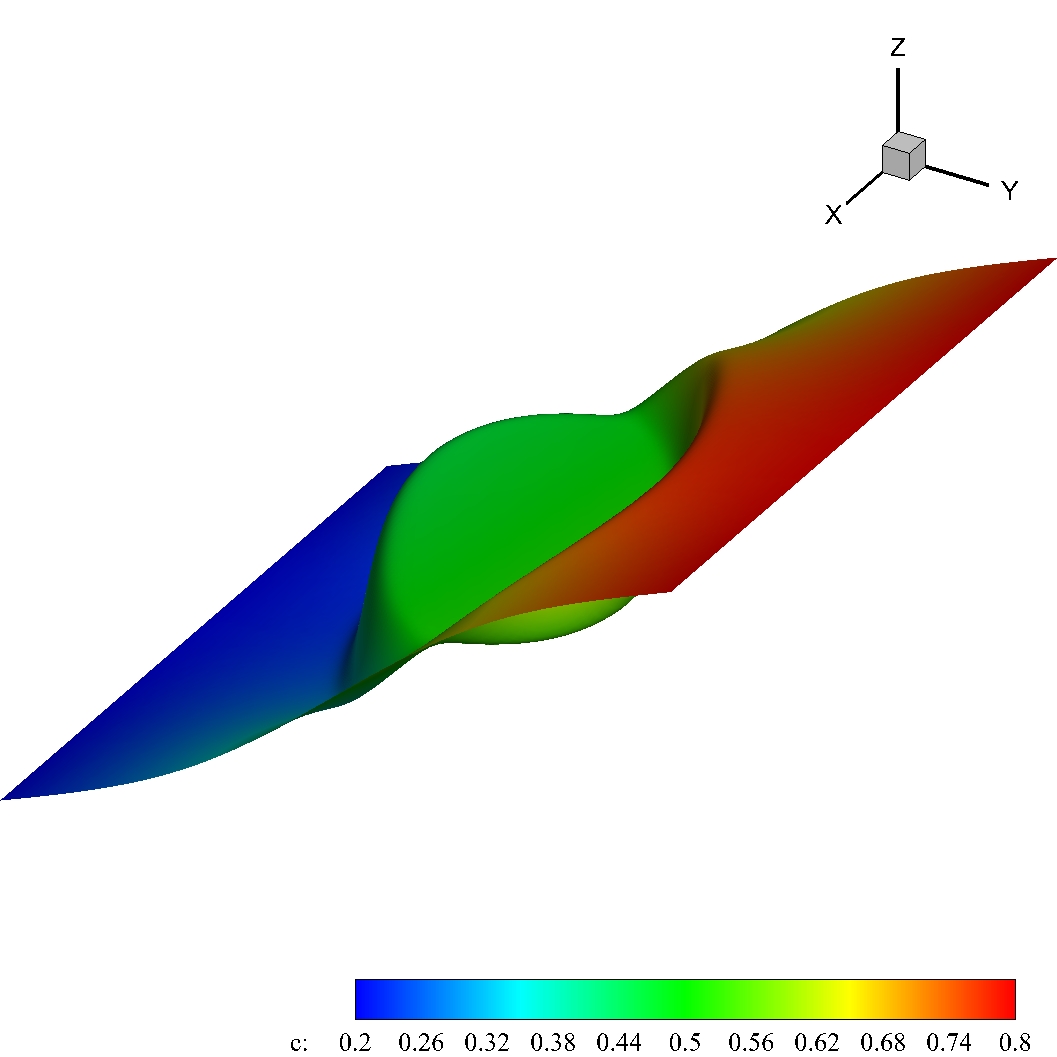} 
        \end{center}
    \end{minipage}
    }
    \subfigure[Lower permeability 
      ]{
    \begin{minipage}[h]{0.33\linewidth}
        \begin{center}
            \includegraphics[width=1.0\textwidth]{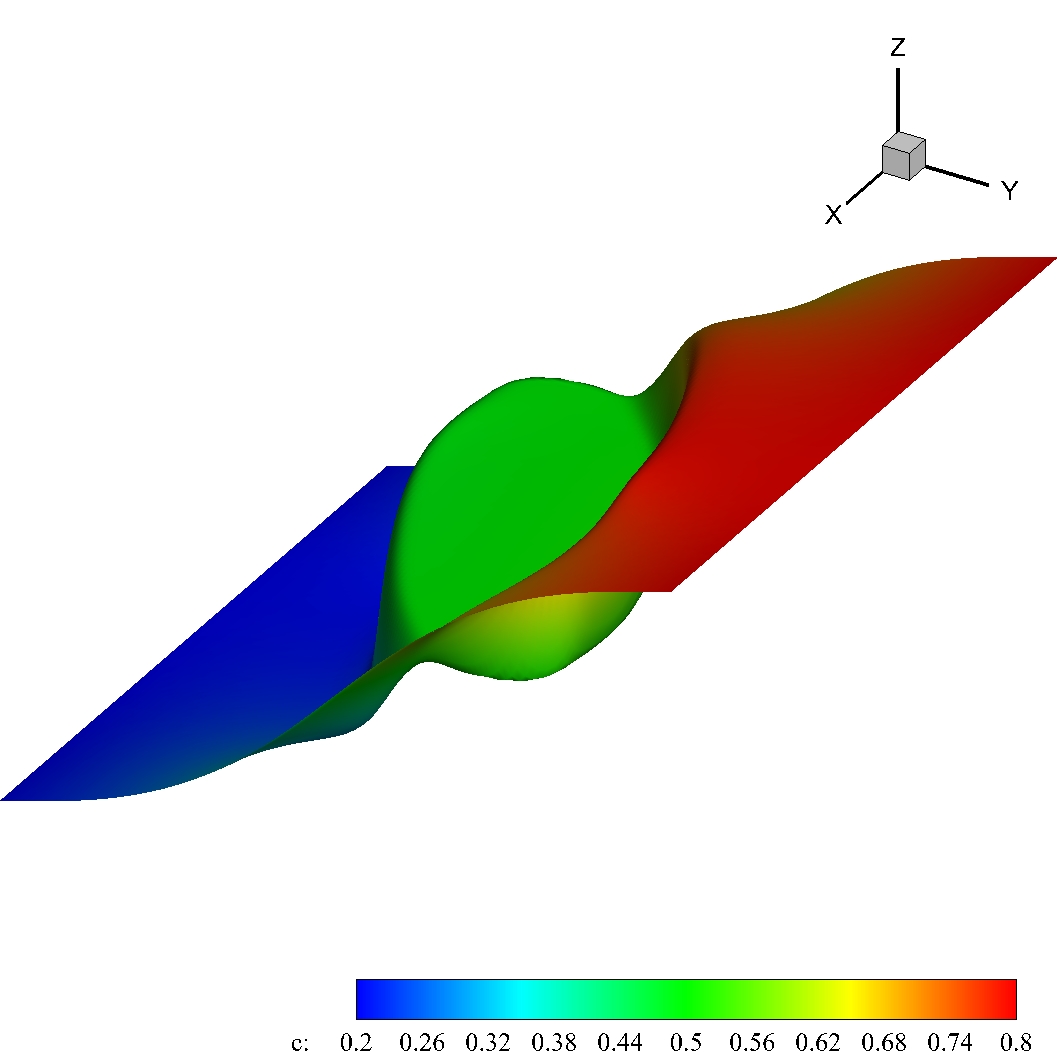} 
        \end{center}
    \end{minipage} 
    }
    \caption{3D profile of $c$ at steady state with different permeabilities 
     $K=\frac{1}{2\sigma\delta}, \frac{1}{2\sigma}$ and $\frac{\delta}{2\sigma}$.}
    \label{fig: 3d steady state}
\end{figure}

\begin{figure}[htbp]
    \subfigure[Higher permeability 
     ]{
    \begin{minipage}[h]{0.33\linewidth}
        \begin{center}
            \includegraphics[width=1.0\textwidth]{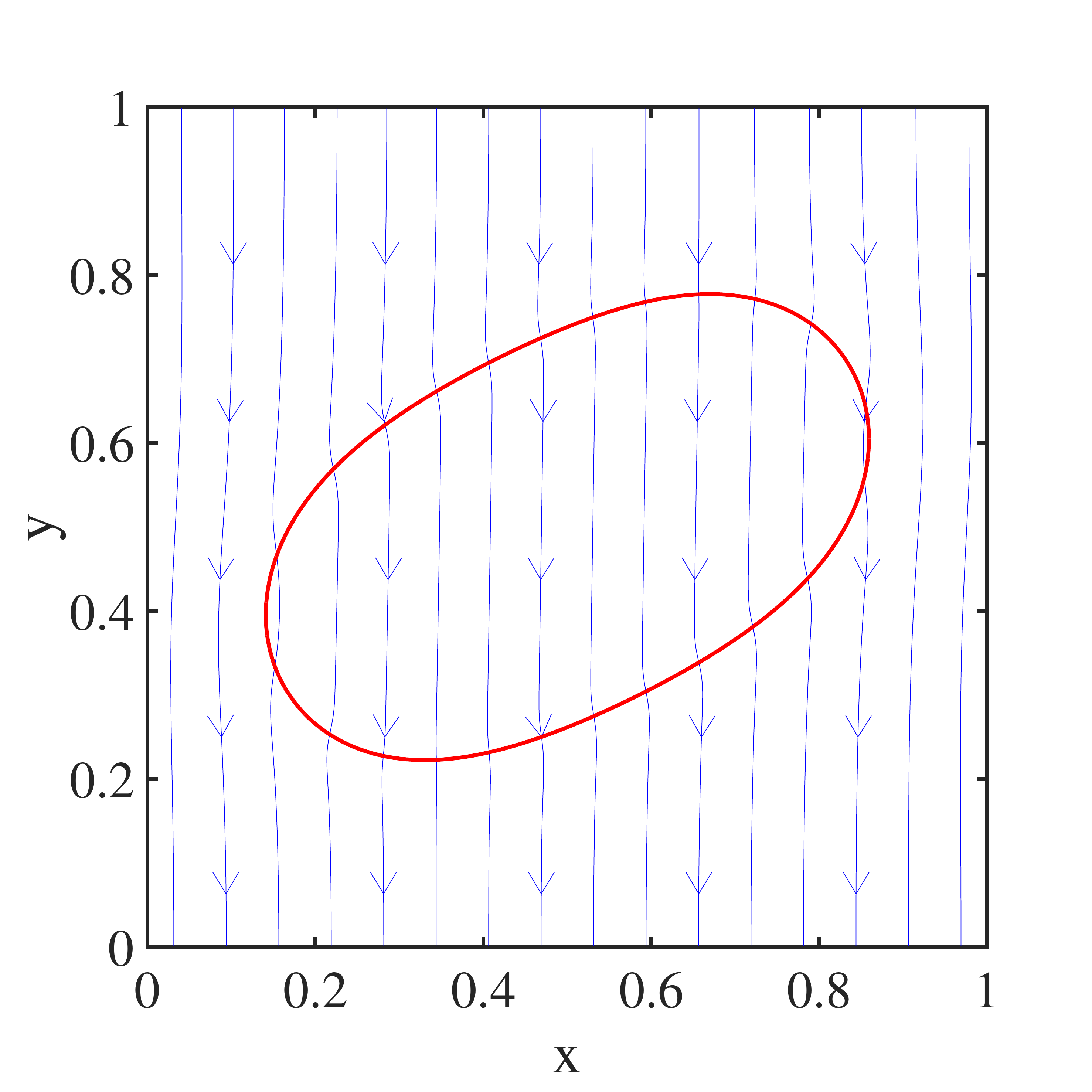} 
        \end{center}
    \end{minipage}
    }
    \subfigure[Medium permeability 
     ]{
    \begin{minipage}[h]{0.33\linewidth}
        \begin{center}
            \includegraphics[width=1.0\textwidth]{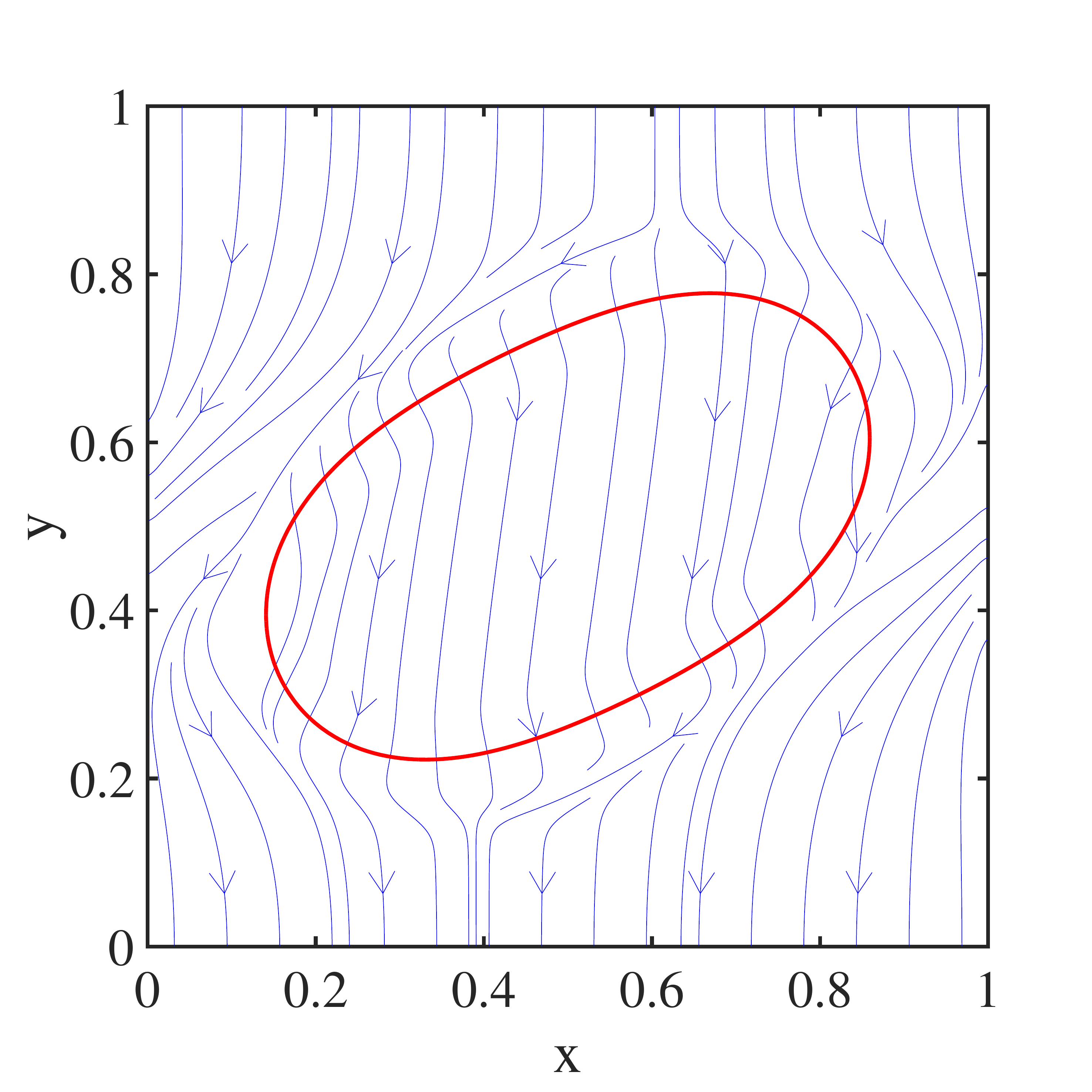} 
        \end{center}
    \end{minipage}
    }
    \subfigure[Lower permeability 
     ]{
    \begin{minipage}[h]{0.33\linewidth}
        \begin{center}
            \includegraphics[width=1.0\textwidth]{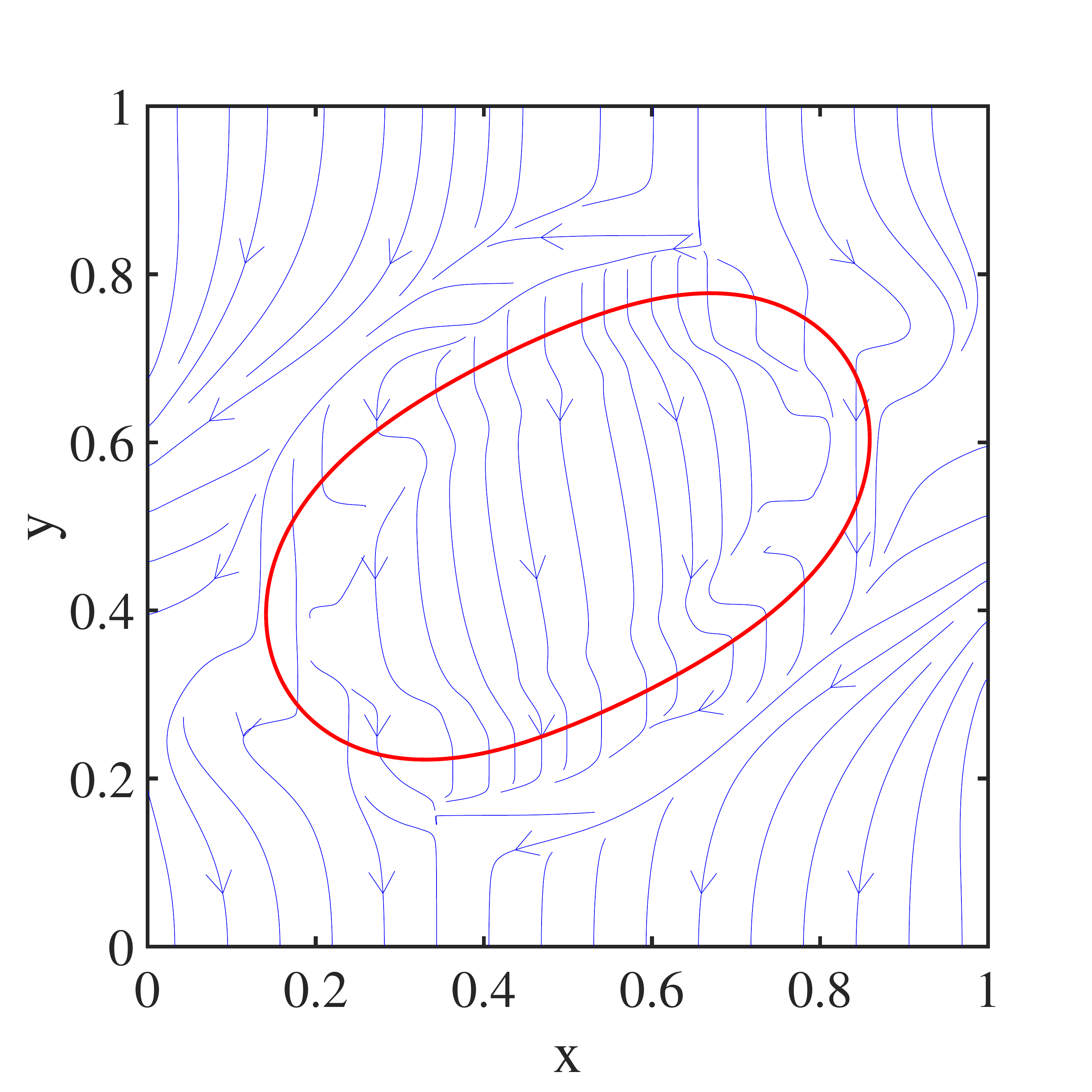} 
        \end{center}
    \end{minipage} 
    }
    \caption{Snapshot of the magnitude for flux at steady state with different permeabilities
    $K=\frac{1}{2\sigma\delta}, \frac{1}{2\sigma}$ and $\frac{\delta}{2\sigma}$.}
    \label{fig: flux fielddiagram}
\end{figure}

\begin{figure}[htbp]
    \subfigure[Higher permeability 
    ]{
    \begin{minipage}[h]{0.33\linewidth}
        \begin{center}
            \includegraphics[width=1.0\textwidth]{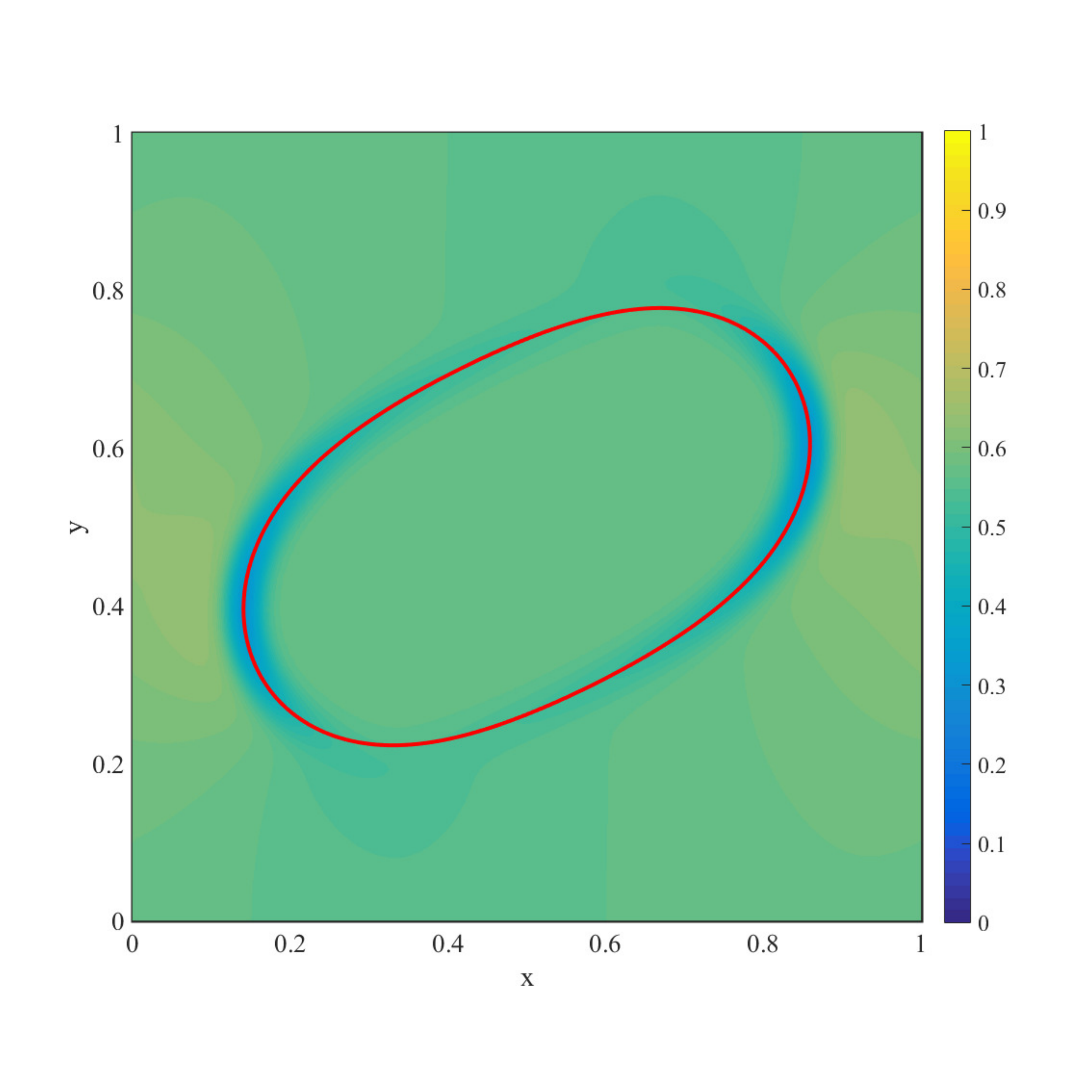} 
        \end{center}
    \end{minipage}
    }
    \subfigure[Medium permeability 
     ]{
    \begin{minipage}[h]{0.33\linewidth}
        \begin{center}
            \includegraphics[width=1.0\textwidth]{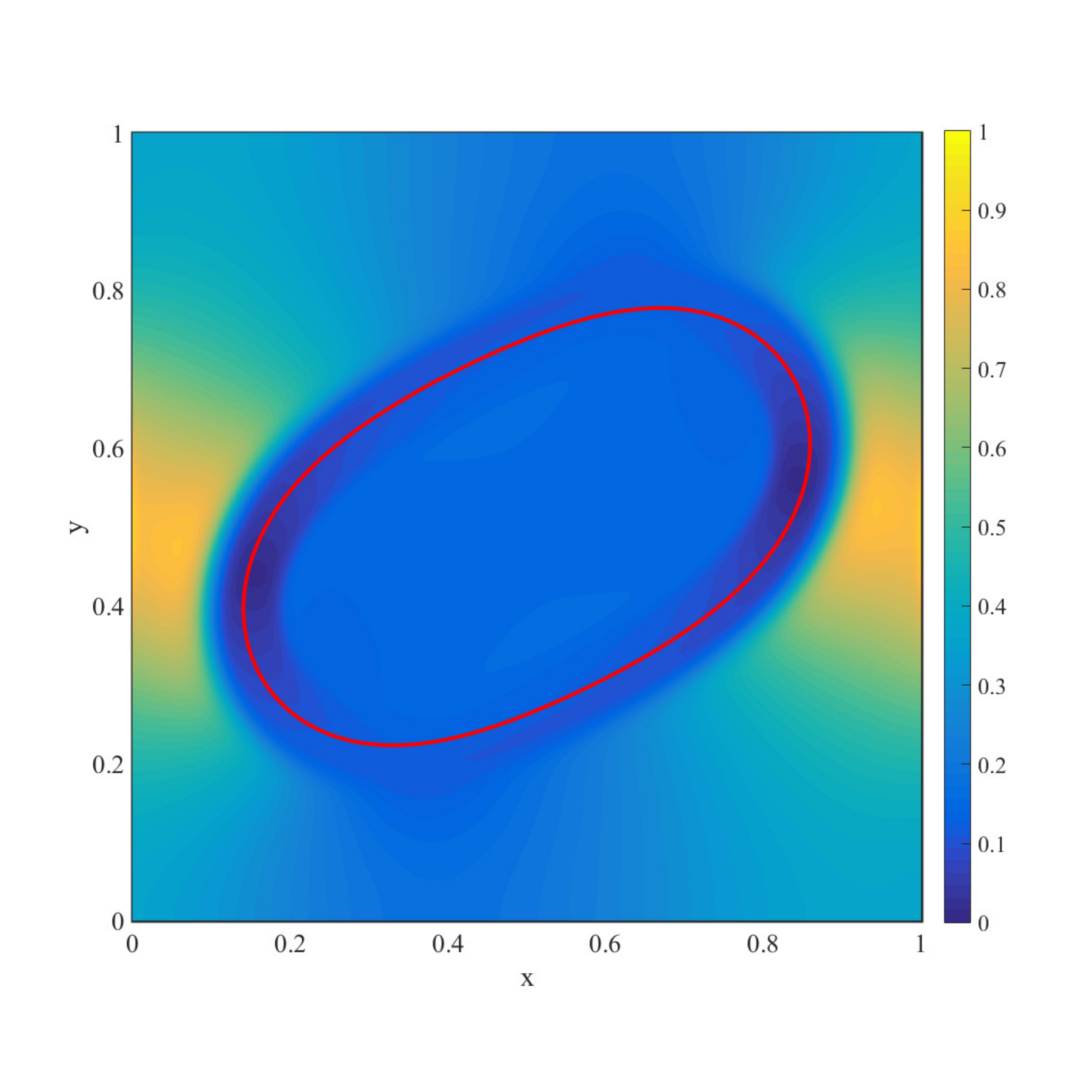} 
        \end{center}
    \end{minipage}
    }
    \subfigure[Lower permeability 
     ]{
    \begin{minipage}[h]{0.33\linewidth}
        \begin{center}
            \includegraphics[width=1.0\textwidth]{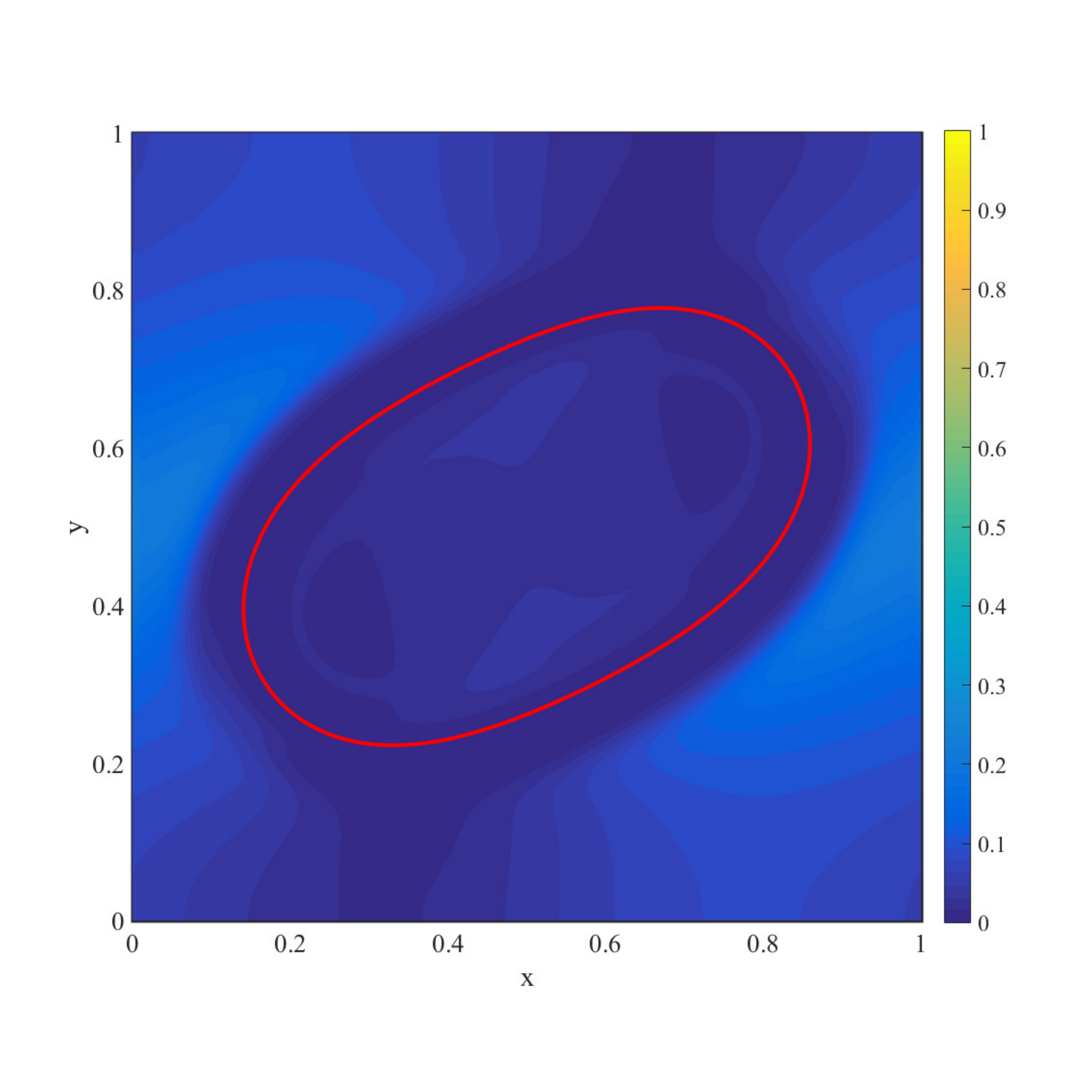} 
        \end{center}
    \end{minipage} 
    }
    \caption{Snapshot of the magnitude for flux at steady state with different permeabilities 
     $K=\frac{1}{2\sigma\delta}, \frac{1}{2\sigma}$ and $\frac{\delta}{2\sigma}$}
    \label{fig: flux magiagram}
\end{figure}

In the next, we consider there are two droplets in the shear flow in a rectangular area $\Omega=[0,2]\times[0,1]$.
The initial condition for $\phi$ is considered as 
\begin{align}
    \phi(\bm{x},0)=\tanh\big(\frac{0.2-\sqrt{(x-0.5)^{2}+(y-0.7)^{2}}}{\sqrt{2}\epsilon}\big)
    +\tanh\big(\frac{0.2-\sqrt{(x-1.5)^{2}+(y-0.3)^{2}}}{\sqrt{2}\epsilon}\big)+1.0.
\end{align}
The permeability $K$ is chosen as $K=\frac{1}{2\sigma}$.
The initial conditions of $c$ and $\bm{u}$ and all the parameters are chosen the same as in the former example $\eqref{exm: comparison}$.
As shown in Fig. \ref{fig: 2droplets}, the initial profiles of concentration and 
droplets are shown in Fig. \ref{fig: 2droplets}

\begin{figure}[htbp]
    \subfigure[$c(\bm{x},0)$.]{
    \begin{minipage}[h]{0.5\linewidth}
        \begin{center}
            \includegraphics[width=1.0\textwidth]{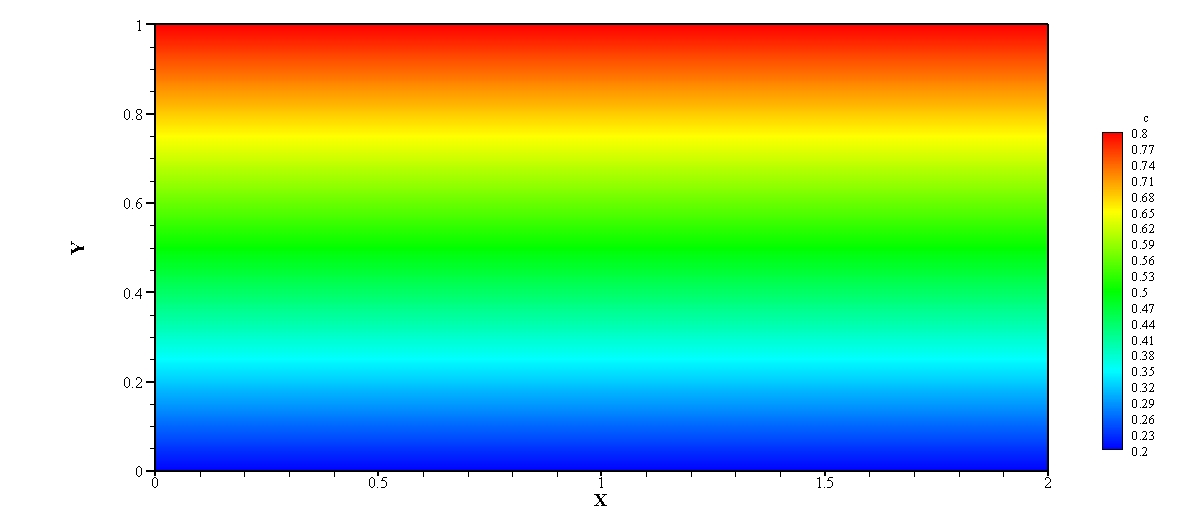} 
        \end{center}
    \end{minipage}
    }
    \subfigure[$\phi(\bm{x},0)$.]{
    \begin{minipage}[h]{0.5\linewidth}
        \begin{center}
            \includegraphics[width=1.0\textwidth]{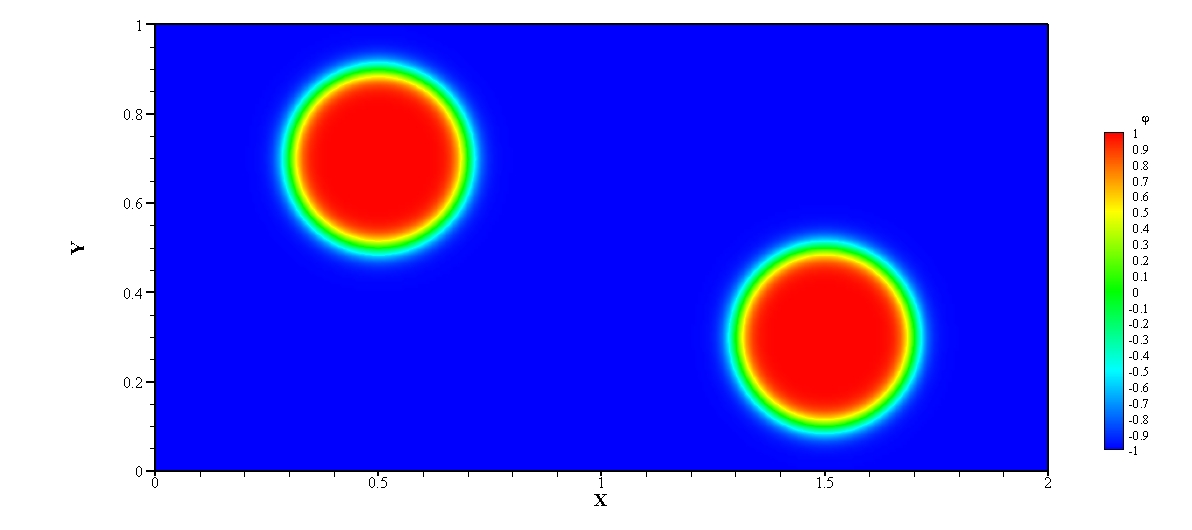} 
        \end{center}
    \end{minipage}
    }
    \caption{Profile of initial condition.}
    \label{fig: 2droplets}
\end{figure}

In Fig. \ref{fig: flux magiagram in 2drops}, the magnitude of flux overtime are presented. Due the counter flow, two droplets move closer and the merge into one droplet around time $t= 1.2$. Before $t=1.2$, the restrict diffusion are observed on two interface and the concentrations in two droplets are two distinct constants as shown in Fig. \ref{fig: concentration in 2drops} a-b. After the merge time, the   droplet achieves new equilibrium of concentration (see  Fig. \ref{fig: concentration in 2drops} d), where the inner region flux is close to zero \ref{fig: flux magiagram in 2drops}d.  
\begin{figure}
	 \begin{center}
		\includegraphics[width=1.0\textwidth]{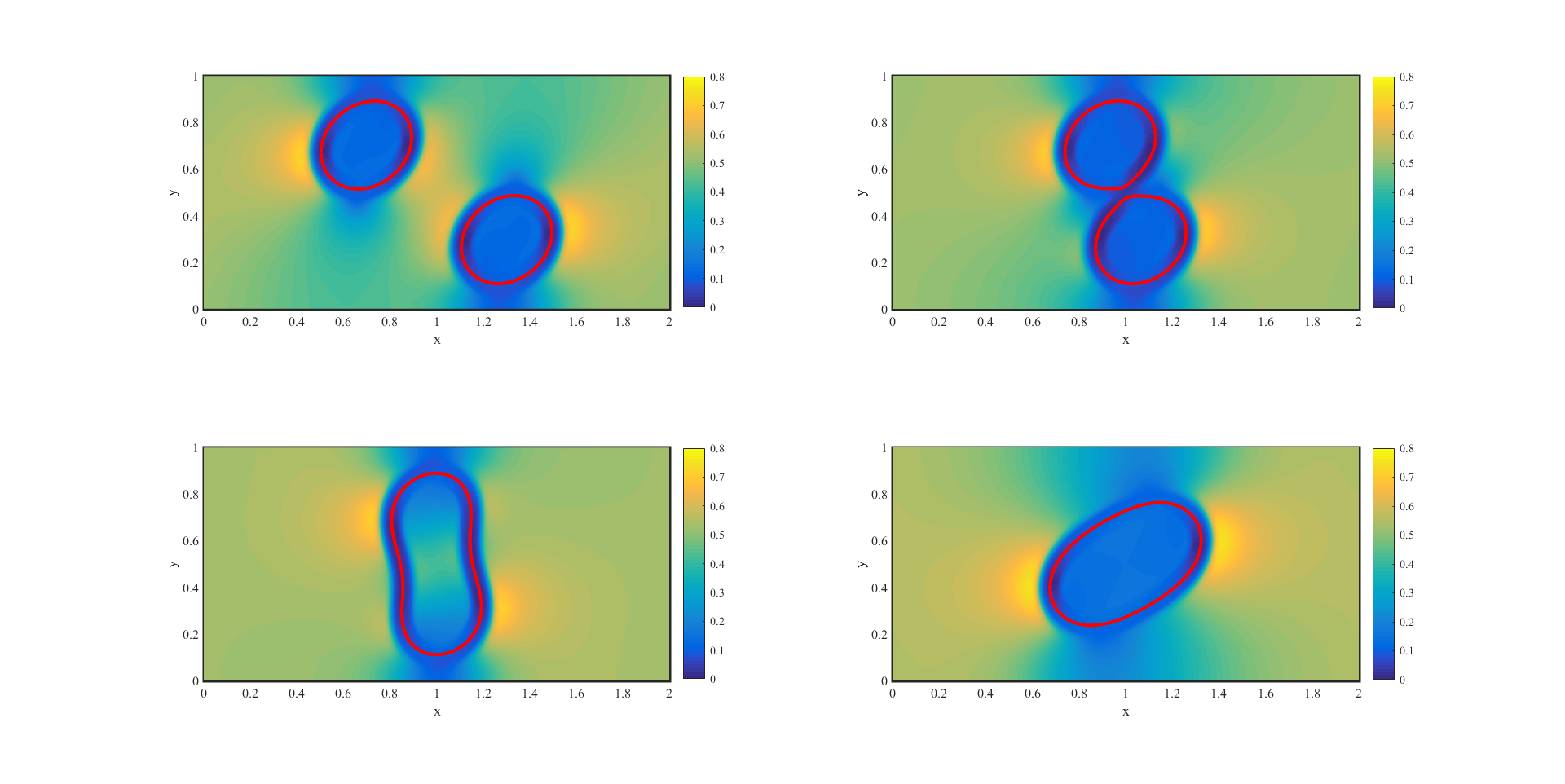} 
	\end{center}
    \caption{Evolution of the magnitude for flux at different time $t=0.5$,$t=1.1$, $t=1.2$ and $t=5.0$. The red circles show the location of membrane.}
\label{fig: flux magiagram in 2drops}
\end{figure}

\begin{figure}
	\begin{center}
		\includegraphics[width=1.0\textwidth]{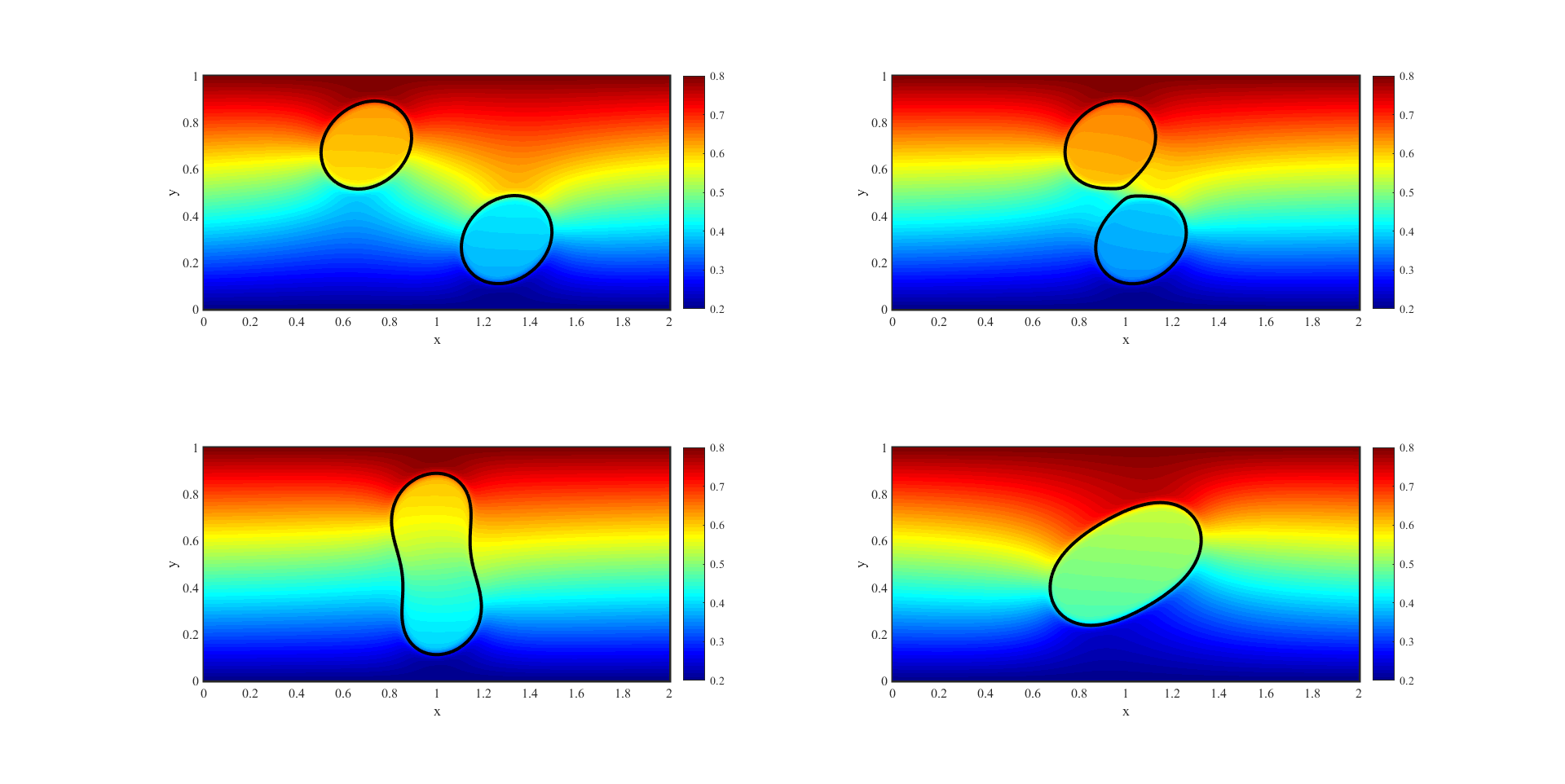} 
	\end{center}
    \caption{Evolution of the concentration at different time $t=0.5$,$t=1.1$, $t=1.2$ and $t=5.0$. The black circles show the location of membrane.}
\label{fig: concentration in 2drops}
\end{figure}

\section{Conclusions}\label{sec: conclusions}
    In this paper, we are devoted to develop a new mathematical model for mass transfer through a semi-permeable membrane with the restricted diffusion by using diffusive interface method. 
    The model is thermal dynamically consistent derived based on energy variation method.  It turns out that the restrict diffusion on the interface could be modeled by modifying the diffusion coefficient as a function of interface permeability  in the dissipation functional. 
    The sharp interface limit for the concentration function is conducted 
    by using the asymptotic analysis.  It confirms that as the interface thickness goes to zero, the diffusive interface model converges to the sharp interface model. 
    Besides, we establish a linear and unconditional energy stable numerical scheme to solve 
    the obtained nonlinear coupled system. 
    The centered-block finite difference method with stagger mesh is used for spacial discretization. 
    The numerical simulations ensure the asymptotic analysis results of the model and energy stability  of numerical scheme.  
   Finally, the validated model is used to  study to interface  permeability. It shows that our diffusive interface model could handle the restrict diffusion  problem successfully and efficiently. Especially, it could deal with the case when two droplets with semipermeable interface merge together easily.   
\section{Acknowledgement}
This work was partially supported by the 
National Natural Science Foundation of China  no. 12071190  and Natural Sciences and Engineering Research Council of Canada (NSERC). Authors also would like to thank  American Institute of Mathematics where this project  started. 
\bibliographystyle{plain}
\bibliography{references}
\end{document}